\newtheorem{Theorem}{Theorem}
\newtheorem{Proposition}{Proposition}
\newtheorem{Lemma}{Lemma}
\newtheoremstyle{Remark}
  {}                   
  {}                   
  {\normalfont}           
  {}                      
  {\normalfont\bfseries}  
  {.}                     
  { }              
  {}
\theoremstyle{Remark}
\newtheorem*{Remark}{Remark}
\newenvironment{Beweis}[1][1]{\vspace{1ex}\noindent{\bf Proof of #1:}}
	{\hfill\qed\vspace{2ex}}
\DeclareMathOperator{\sfA}{\mathsf{A}}
\DeclareMathOperator{\sfAA}{\sfA\!\sfA}
\DeclareMathOperator{\sfB}{\mathsf{B}}
\DeclareMathOperator{\sfAB}{\sfA\!\sfB}
\DeclareMathOperator{\sfBB}{\sfB\!\sfB}
\DeclareMathOperator{\sft}{\mathsf{t}}
\DeclareMathOperator{\sfs}{\mathsf{s}}
\DeclareMathOperator{\sfx}{\mathsf{x}}
\DeclareMathOperator{\sfy}{\mathsf{y}}
\DeclareMathOperator{\Ext}{\mathsf{Ext}}
\DeclareMathOperator{\Surv}{\mathsf{Surv}}
\DeclareMathOperator{\G}{\mathbb{G}}
\DeclareMathOperator{\N}{\mathbb{N}}
\def\Prob{\mathbb{P}}
\def\E{\mathbb{E}}
\def\Var{\mathbb{V}ar}
\DeclareMathOperator{\T}{\mathbb{T}}
\def\calY{\mathcal{Y}}
\def\calZ{\mathcal{Z}}
\DeclareMathOperator{\Z}{\mathcal{Z}}
\DeclareMathOperator{\1}{\mathbf{1}}
\def\Pstlim{\mathop{\Prob^{*}\text{\rm -lim}\,}}
\def\eps{\varepsilon}
\title{A host-parasite model for a two-type cell population}
\author{\textsc{Gerold Alsmeyer} and \textsc{S\"oren Gr\"ottrup}\\ \\
\large
        Institut f\"ur Mathematische Statistik,
        Universit\"at M\"unster,\\
\large
        Einsteinstra\ss e 62,
        DE-48149 M\"unster,
        Germany}
\begin{document}

\maketitle

\begin{abstract}
We consider a host-parasite model for a population of cells that can be of two types, $\sfA$ or $\sfB$, and exhibits unilateral reproduction: while a $\sfB$-cell always splits into two cells of the same type, the two daughter cells of an $\sfA$-cell can be of any type. The random mechanism that describes how parasites within a cell multiply and are then shared into the daughter cells is allowed to depend on the hosting mother cell as well as its daughter cells. Focusing on the subpopulation of $\sfA$-cells and its parasites, our model differs from the single-type model recently studied by \textsc{Bansaye} \cite{Bansaye:08} in that the sharing mechanism may be biased towards one of the two types. Our main results are concerned with the nonextinctive case and provide information on the behavior, as $n\to\infty$, of the number $\sfA$-parasites in generation $n$ and the relative proportion of $\sfA$- and $\sfB$-cells in this generation which host a given number of parasites. As in \cite{Bansaye:08}, proofs will 
make use of a so-called random cell line which, when conditioned to be of type $\sfA$, behaves like a branching process in random environment.
\end{abstract}

\section{Introduction}\label{Section.Introduction}

The reciprocal adaptive genetic change of two antagonists (e.g.\ different species or genes)
through reciprocal selective pressures is known as host-parasite coevolution. It may be  observed even in real-time under both, field and laboratory conditions, if reciprocal adaptations take place rapidly and generation times are short. For more information see e.g.\ \cite{Laine:09, Woolhouse:02}.

The present work studies a host-parasite branching model with two types of cells (the hosts), here called $\sfA$ and $\sfB$, and proliferating parasites colonizing the cells. Adopting a genealogical perspective, we are interested in the evolution of certain characteristics over generations and under the following assumptions on the reproductive behavior of cells and parasites. All cells behave independently and split into two daughter cells after one unit of time. The types of the daughter cells of a type-$\sfA$ cell are chosen in accordance with a random mechanism which is the same for all mother cells of this type whereas both daughter cells of a type-$\sfB$ cell are again of type $\sfB$. Parasites within a cell multiply in an iid manner to produce a random number of offspring the distribution of which may depend on the type of this cell as well as on those of its daughter cells. The same holds true for the random mechanism by which the offspring is shared into these daughter cells.

\vspace{.2cm}
The described model grew out of a discussion with biologists in an attempt to provide a first very simple setup that allows to study coevolutionary adaptations, here due to the presence of two different cell types. It may also be viewed as a simple multi-type extension of a model studied by \textsc{Bansaye} \cite{Bansaye:08} which in turn forms a discrete-time version of a model introduced by \textsc{Kimmel} \cite{Kimmel:97}. Bansaye himself extended his results in \cite{Bansaye:09} by allowing immigration and random environments, the latter meaning that each cell chooses the reproduction law for the parasites it hosts in an iid manner. 
Let us further mention related recent work by \textsc{Guyon} \cite{Guyon:07} who studied another discrete-time model with asymmetric sharing and obtained limit theorems under ergodic hypotheses which, however, exclude an extinction-explosion principle for the parasites which is valid in our model.

\vspace{.2cm}
We continue with the introduction of some necessary notation which is similar to the one in \cite{Bansaye:08}. Making the usual assumption of starting from one ancestor cell, denoted as $\varnothing$, we put $\G_{0}:=\{\varnothing\}$, $\G_{n}:=\{0,1\}^{n}$ for $n\ge 1$, and let
\begin{equation*}
	\T := \bigcup_{n\in\mathbb N_0} \G_n\quad\text{with}\quad\G_n := \{0,1\}^n
\end{equation*}
be the binary Ulam-Harris tree rooted at $\varnothing$ which provides the label set of all cells in the considered population. Plainly, $\G_{n}$ contains the labels of all cells of generation $n$. For any cell $v\in\T$, let $T_v\in\{\sfA,\sfB\}$ denote its type and $Z_v$ the number of parasites it contains. \emph{Unless stated otherwise, the ancestor cell is assumed to be of type $\sfA$ and to contain one parasite, i.e.}
\begin{equation}\label{SA1}\tag{SA1}
T_{\varnothing}=\sfA\quad\text{and}\quad Z_{\varnothing}=1.
\end{equation}
Then, for $\sft\in\{\sfA,\sfB\}$ and $n\ge 0$, define
\begin{equation*}
 	\G_n(\sft) := \{v\in\G_n :  T_v=\sft\}\quad\text{and}\quad\G^*_n(\sft) := \{v\in\G_n	(\sft) :  Z_v>0\} 
\end{equation*}
as the sets of type-$\sft$ cells and type-$\sft$ contaminated cells in generation $n$, respectively. The set of all contaminated cells in generation $n$ are denoted $\G^*_n$,
thus $\G^*_n = \G^*_n(\sfA)\cup\G^*_n(\sfB)$. 

As common, we write $v_{1}...v_{n}$ for $v=(v_1,...,v_n)\in\G_n$, $uv$ for the concatenation of $u,v\in\T$, i.e.
\begin{equation*}
	uv=u_1...,u_m v_1...v_n\ \ \text{if}\ u=u_1...u_m\ \text{and}\ v=v_1...v_n,
\end{equation*}
and $v|k$ for the ancestor of $v=v_{1}...v_{n}$ in generation $k\le n$, thus $v|k=v_1,...,v_k$. Finally, if $v|k=u$ for some $k$ and $u\ne v$, we write $u<v$.

\vspace{.2cm}
The process $(T_v)_{v\in\T}$ is a Markov process indexed by the tree $\T$ as defined in \cite{BenPeres:94}. It has transition probabilities
\begin{align*}
	&\Prob(T_{v0}=\sfx,T_{v1}=\sfy|T_v = \sfA) = p_{\sfx\sfy},
	\quad (\sfx,\sfy)\in\{(\sfA,\sfA),(\sfA,\sfB),(\sfB,\sfB)\},\\[1ex]
	&\Prob(T_{v0}=\sfB,T_{v1}=\sfB|T_v = \sfB) = 1,
\end{align*}
and we denote by
\begin{equation*}
 	p_0 := p_{\sfAA} + p_{\sfAB}=1-p_{\sfBB}\quad\text{and}\quad p_1 := p_{\sfAA}
\end{equation*}
the probabilities that the first and the second daughter cell are of type $\sfA$, respectively.
In order to rule out total segregation of type-$\sfA$ and type-$\sfB$ cells, which would just lead back to the model studied in \cite{Bansaye:08}, it will be assumed throughout that
\begin{equation}\label{SA2}\tag{SA2}
p_{\sfAA}<1.
\end{equation}
 
The sequence $(\#\G_n(\sfA))_{n\ge 0}$ obviously forming a Galton-Watson branching process with one ancestor (as $T_{\varnothing}=\sfA$) and mean 
\begin{equation*}
  \nu := p_{0}+p_{1}=2 p_{\sfAA}+p_{\sfAB}=1+(p_{\sfAA}-p_{\sfBB})<2, 
\end{equation*}
it is a standard fact that (see e.g.\ \cite{Athreya+Ney:72})
\begin{equation*}
	\#\G_n(\sfA)\rightarrow 0\text{ a.s.}\quad\text{iff}\quad p_{\sfAA}\leq p_{\sfBB}\quad\text{and}\quad p_{\sfAB}<1.
\end{equation*}

To describe the multiplication of parasites, let $Z_{v}$ denote the number of parasites in cell $v$ and, for $\sft\in\{\sfA,\sfB\}$, $\sfs\in\{\sfAA,\sfAB,\sfBB\}$, let
$$ \left(X^{(0)}_{k,v}(\sft, \sfs),X^{(1)}_{k,v}(\sft, \sfs)\right)_{k\in\mathbb N, v\in\T},\quad\sft\in\{\sfA,\sfB\},\ \sfs\in\{\sfAA,\sfAB,\sfBB\}$$ 
be independent families of iid $\mathbb N^2_0$-valued random vectors with respective generic copies $(X^{(0)}(\sft, \sfs),X^{(1)}(\sft, \sfs))$. If $v$ is of type $\sft$ and their daughter cells are of type $\sfx$ and $\sfy$, then $X^{(i)}_{k,v}(\sft, \sfx\!\sfy)$ gives the offspring number of the $k^{\rm th}$ parasite in cell $v$ that is shared into the daughter cell $vi$ of $v$. Since type-$\sfB$ cells can only produce daughter cells of the same type, we will write $(X^{(0)}_{k,v}(\sfB),X^{(1)}_{k,v}(\sfB))$ as shorthand for $(X^{(0)}_{k,v}(\sfB, \sfBB),X^{(1)}_{k,v}(\sfB, \sfBB))$. To avoid trivialities, it is always assumed hereafter that
\begin{equation}\label{SA3}\tag{SA3}
  \Prob\left(X^{(0)}(\sfA, \sfAA) \leq 1,\ X^{(1)}(\sfA, \sfAA)\leq1\right)<1
\end{equation}
and
\begin{equation}\label{SA4}\tag{SA4}
  \Prob\left(X^{(0)}(\sfB)\leq1,\ X^{(1)}(\sfB)\leq1\right)<1.
\end{equation}

Next, observe that
\begin{equation*}
 	(Z_{v0}, Z_{v1}) = \sum_{\sft\in\{\sfA,\sfB\}}\1_{\{T_{v}=\sft\}}\sum_{\sfs\in\{\sfAA,\sfAB,\sfBB\}}\1_{\{(T_{v0},T_{v1})=\sfs\}}\sum_{k=1}^{Z_v}(X^{(0)}_{k,v}(\sft, \sfs),X^{(1)}_{k,v}(\sft, \sfs)).
\end{equation*}
We put $\mu_{i,\sft}(\sfs) := \E X^{(i)}(\sft, \sfs)$ for $i\in\{0,1\}$ and $\sft,\sfs$ as before, write $\mu_{i,\sfB}$ as shorthand for $\mu_{i,\sfB}(\sfBB)$ and assume throughout that $\mu_{i,\sft}(\sfs)$ are finite and
\begin{equation}\label{SA5}\tag{SA5}
 	\mu_{0,\sfA}(\sfAA),\ \mu_{1,\sfA}(\sfAA),\ \E\left(\#\G^*_1(\sfB)\right)>0,\ \mu_{0,\sfB},\ \mu_{1,\sfB}\ >\ 0.
\end{equation}

The total number of parasites in cells of type $\sft\in\{\sfA, \sfB\}$ at generation $n$ is denoted by
\begin{equation*}
 	\calZ_n(\sft) := \sum_{v\in\G_n(\sft)} Z_v,
\end{equation*}
and we put $\calZ_n := \calZ_n(\sfA)+\calZ_n(\sfB)$, plainly the total number of all parasites at generation $n$. Both, $(\calZ_n)_{n\ge 0}$ and $(\calZ_n(\sfA))_{n\ge 0}$, are transient Markov chains with absorbing state $0$ and satisfy the extinction-explosion principle (see Section I.5 in \cite{Athreya+Ney:72} for a standard argument), i.e.
\begin{equation*}
 	\Prob(\calZ_n\rightarrow 0)+\Prob(\calZ_n\rightarrow\infty)=1\quad\text{and}\quad \Prob(\calZ_n(\sfA)\rightarrow 0)+\Prob(\calZ_n(\sfA)\rightarrow\infty)=1.
\end{equation*}
The extinction events are defined as
\begin{equation*}
 	\Ext := \{\calZ_n\rightarrow 0\}\quad\text{and}\quad\Ext(\sft):=\{\calZ_n(\sft)\rightarrow 0\},\quad\sft\in\{\sfA,\sfB\},
\end{equation*}
their complements by $\Surv$ and $\Surv(\sft)$, respectively.

\vspace{.2cm}
As in \cite{Bansaye:08}, we are interested in the statistical properties of an infinite \emph{random cell line}, picked however from those lines consisting of $\sfA$-cells only. 
This leads to a so-called \emph{random $\sfA$-cell line}. Since $\sfB$-cells produce only daughter cells of the same type, the properties of a random $\sfB$-cell line may be deduced from the afore-mentioned work and are therefore not studied hereafter.

For the definition of a random $\sfA$-cell line, a little more care than in \cite{Bansaye:08} is needed because cells occur in two types and parasitic reproduction may depend on the types of the host and both its daughter cells. On the other hand, we will show in Section \ref{Section.Preliminaries} that a random $\sfA$-cell line still behaves like a branching process in iid random environment (BPRE) which has been a fundamental observation in \cite{Bansaye:08} for a random cell line in the single-type situation.

Let $U=(U_n)_{n\in\mathbb N}$ be an iid sequence of symmetric Bernoulli variables independent of the parasitic evolution and put $V_{n}:=U_{1}...U_{n}$. Then
$$ \varnothing=:V_{0}\to V_{1}\to V_{2}\to...\to V_{n}\to... $$
provides us with a random cell line in the binary Ulam-Haris tree, and we denote by
\begin{equation*}
 	T_{[n]} = T_{V_n}\quad\text{and}\quad Z_{[n]} = Z_{V_n}\quad n\ge 0,
\end{equation*}
the cell types and the number of parasites along that random cell line. A random $\sfA$-cell line up to generation $n$ is obtained when $T_{[n]}=\sfA$, for then $T_{[k]}=\sfA$ for any
$k=0,...,n-1$ as well. As will be shown in Prop.\ \ref{prop:random cell line=BPRE}, the conditional law of $(Z_{[0]},...,Z_{[n]})$ given $T_{[n]}=\sfA$, i.e., given an $\sfA$-cell line up to generation $n$ is picked at random, equals the law of a certain BPRE $(Z_{k}(\sfA))_{k\ge 0}$ up to generation $n$, for each $n\in\N$. It should be clear that this cannot be generally true for the unconditional law of $(Z_{[0]},...,Z_{[n]})$, due to the multi-type structure of the cell population.

\vspace{.2cm}
Aiming at a study of host-parasite coevolution in the framework of a multitype host population, our model may be viewed as the simplest possible alternative. There are only two types of host cells and reproduction is unilateral in the sense that cells of type $\sfA$ may give birth to both, $\sfA$- and $\sfB$-cells, but those of type $\sfB$ will never produce cells of the opposite type. 
The basic idea behind this restriction is that of irreversible mutations that generate new types of cells but never lead back to already existing ones. Observe that our setup could readily be generalized without changing much the mathematical structure by allowing the occurrence of further irreversible mutations from cells of type $\sfB$ to cells of type $\mathsf{C}$, and so on.

\vspace{.2cm}
The rest of this paper is organized as follows. We focus on the case of non-extinction of 
contaminated $\sfA$-cells, that is $\Prob(\Ext(\sfA))<1$. Basic results on $\calZ_n(\sfA)$, $Z_{[n]}$, $\#\G^*_n(\sfA)$ and $\#\G^*_n$ including the afore-mentioned one will be shown in Section \ref{Section.Preliminaries} and be partly instrumental for the proofs of our results on the asymptotic behavior of the relative proportion of contaminated cells with $k$ parasites within the population of all contaminated cells. These results are stated in Section \ref{Section.Results} and proved in Section \ref{Section.Proofs}. A glossary of the most important notation used throughout may be found at the end of this article.

\section{Basic Results}\label{Section.Preliminaries}

We begin with a number of basic properties of and results about the quantities $\G^*_n(\sfA)$, $\G^*_n$, $\calZ_n(\sfA)$ and $Z_{[n]}$.

\subsection{The random $\sfA$-cell line and its associated sequence $(Z_{[n]})_{n\ge 0}$}

In \cite{Bansaye:08}, a random cell line was obtained by simply picking a random path in the infinite binary Ulam-Harris tree representing the cell population. Due to the multi-type structure here, we must proceed in a different manner when restricting to a specific cell type, here type $\sfA$. In order to study the properties of a ''typical'' $\sfA$-cell in generation $n$ for large $n$, i.e., an $\sfA$-cell picked at random from this generation, a convenient (but not the only) way is to first pick at random a cell line up to generation $n$ from the full height $n$ binary tree as in \cite{Bansaye:08} and then to condition upon the event that the cell picked at generation $n$ is of type $\sfA$. This naturally leads to a random $\sfA$-cell line up to generation $n$, for $\sfA$-cells can only stem from cells of the same type. Then looking at the conditional distribution of the associated parasitic random vector $(Z_{[0]},...,Z_{[n]})$ leads to a BPRE not depending on $n$ and thus to an analogous situation as in 
\cite{Bansaye:08}. The precise result is stated next.

\begin{Proposition}\label{prop:random cell line=BPRE}
Let $(Z_{n}(\sfA))_{n\ge 0}$ be a BPRE with one ancestor and iid environmental sequence
$(\Lambda_{n})_{n\ge 1}$ taking values in $\{\mathcal L(X^{(0)}(\sfA, \sfAA)), \mathcal L(X^{(1)}(\sfA, \sfAA)), \mathcal L(X^{(0)}(\sfA, \sfAB))\}$ such that
\begin{equation*}
 	\Prob\left(\Lambda_1=\mathcal L(X^{(0)}(\sfA, \sfAB))\right)=\frac{p_{\sfAB}}{\nu}\quad\text{and}\quad\Prob\left(\Lambda_1=\mathcal L(X^{(i)}(\sfA, \sfAA))\right)=\frac{p_{\sfAA}}{\nu},
\end{equation*}
for $i\in\{0,1\}$.
Then the conditional law of $(Z_{[0]},...,Z_{[n]})$ given $T_{[n]}=\sfA$ equals the law of $(Z_{0}(\sfA),...,Z_{n}(\sfA))$, for each $n\ge 0$.
\end{Proposition}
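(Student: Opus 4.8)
The plan is to compute the conditional law of $(Z_{[0]},\dots,Z_{[n]})$ given $T_{[n]}=\sfA$ directly, by first conditioning on the type-sequence $(T_{[0]},\dots,T_{[n]})$ along the random cell line and then recognizing the resulting transition structure as that of the claimed BPRE. Since $\sfA$-cells only descend from $\sfA$-cells, the event $\{T_{[n]}=\sfA\}$ coincides with $\bigcap_{k=0}^n\{T_{[k]}=\sfA\}$, so on this event all $n$ steps of the cell line are $\sfA$-to-$\sfA$ transitions. For each such step, the mother cell $V_{k-1}$ is of type $\sfA$, and the next cell $V_k=V_{k-1}U_k$ is obtained by choosing the $U_k=0$ or $U_k=1$ daughter with probability $1/2$ each; conditioning on that daughter being of type $\sfA$, one of three things happened to the sibling pair $(T_{V_{k-1}0},T_{V_{k-1}1})$: it was $(\sfA,\sfA)$ and we went left, it was $(\sfA,\sfA)$ and we went right, or it was $(\sfA,\sfB)$ and we followed the $\sfA$-daughter (left, by the labelling convention implicit in $p_{\sfAB}$). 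A short Bayes computation using $\Prob(T_{v0}=\sfx,T_{v1}=\sfy\mid T_v=\sfA)=p_{\sfx\sfy}$ and the independent fair coin $U_k$ shows these three sub-cases have conditional probabilities proportional to $p_{\sfAA},p_{\sfAA},p_{\sfAB}$, i.e.\ equal to $p_{\sfAA}/\nu,\ p_{\sfAA}/\nu,\ p_{\sfAB}/\nu$ — exactly the environment law of $(\Lambda_n)$ in the statement.

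Next I would account for the parasite dynamics. Conditionally on the whole type configuration of the tree (equivalently, on which of the three sub-cases occurs at each step), the number of parasites $Z_{V_k}$ in the next cell on the line is, by the displayed recursion for $(Z_{v0},Z_{v1})$, the sum of $Z_{V_{k-1}}$ iid copies of $X^{(0)}(\sfA,\sfAA)$, of $X^{(1)}(\sfA,\sfAA)$, or of $X^{(0)}(\sfA,\sfAB)$ according to the sub-case; and — this is the key independence input — the families $\bigl(X^{(i)}_{k,v}(\cdot,\cdot)\bigr)$ are independent of the type process $(T_v)_v$ and of $U$, and the vectors attached to distinct cells are independent. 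Hence, given the sub-case sequence, $(Z_{[0]},\dots,Z_{[n]})$ is a Markov chain whose $k$-th transition is "produce offspring from $Z_{[k-1]}$ individuals according to the reproduction law selected at step $k$". Averaging over the sub-case sequence with the weights computed above yields precisely a BPRE with iid environment $(\Lambda_k)$ distributed as claimed. I would make this rigorous by an induction on $n$, or equivalently by writing the joint conditional probability $\Prob(Z_{[0]}=z_0,\dots,Z_{[n]}=z_n\mid T_{[n]}=\sfA)$ as a sum over the $3^n$ sub-case sequences and factoring it into a product of per-step kernels times per-step environment weights, matching term by term the analogous expression for the BPRE $(Z_k(\sfA))$.

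One point deserving care is the compatibility across $n$: the event $\{T_{[n]}=\sfA\}$ shrinks as $n$ grows, yet the claim is that the conditional law of the $n$-step initial segment is the same BPRE for every $n$. This is exactly why the Bayes computation must be done step-by-step rather than globally: the renormalization by $\nu$ at each step is "local" and does not interact with how many further $\sfA$-steps are demanded. Concretely, $\Prob(T_{[n]}=\sfA)=(\nu/2)^n$ (each $\sfA$-to-$\sfA$ step survives the conditioning with unnormalized weight $\nu/2$ — namely $p_{\sfAA}/2+p_{\sfAA}/2+p_{\sfAB}/2$ from the three sub-cases), and the conditional per-step environment and offspring kernels are manifestly independent of $n$; I would state this explicitly to preempt any worry about a hidden $n$-dependence.

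The main obstacle, and the only genuinely delicate bookkeeping, is the joint conditioning: one must verify that conditioning on $T_{[n]}=\sfA$ does not secretly bias the parasite offspring variables beyond selecting the reproduction law — i.e.\ that within a fixed sub-case the law of $Z_{V_k}$ given $Z_{V_{k-1}}$ is still that of a plain $Z_{V_{k-1}}$-fold convolution — and that the successive environments are genuinely iid and not merely exchangeable or weakly dependent. Both follow from the stipulated mutual independence of the families $\bigl(X^{(i)}_{k,v}(\sft,\sfs)\bigr)_{k,v}$ across the labels $(\sft,\sfs)$, their independence of $(T_v)_v$ and of $U$, and the fact that the random cell line $V_0<V_1<\cdots$ visits a fresh cell at each generation so no parasite-offspring vector is ever reused; I would spell this out carefully, as it is the crux of why a random $\sfA$-cell line is a BPRE rather than something with correlated environment.
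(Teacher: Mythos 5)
Your proposal is correct and in substance identical to the paper's argument: the paper formalizes exactly your per-step computation --- $\Prob(T_{[n]}=\sfA)=(\nu/2)^n$, conditional sub-case weights $p_{\sfAA}/\nu,\ p_{\sfAA}/\nu,\ p_{\sfAB}/\nu$, and the one-step transition as the corresponding mixture of $z$-fold convolutions of $X^{(0)}(\sfA,\sfAA)$, $X^{(1)}(\sfA,\sfAA)$, $X^{(0)}(\sfA,\sfAB)$ --- carried out via the induction on $n$ (with the Markov property) that you yourself name as one of your two equivalent routes to rigor. No gap to report.
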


\begin{proof}
We use induction over $n$ and begin by noting that nothing has to be shown if $n=0$.
For $n\ge 1$ and $(z_0,...,z_n)\in\N^{n+1}_0$, we introduce the notation
\begin{equation*}
 	C_{z_0,...,z_n}:=\{(Z_{[0]},...,Z_{[n]})=(z_0,...,z_n)\}\quad\text{and}\quad C_{z_0,...,z_n}^{\sfA}:=C_{z_0,...,z_n}\cap\{T_{[n]}=\sfA\}
\end{equation*}
and note that
\begin{equation*}
  \Prob\left(T_{[n]}=\sfA\right)=2^{-n}\, \E\left(\sum_{v\in\G_n}\1_{\{T_v=\sfA\}}\right)=\left(\frac{\nu}{2}\right)^n,
\end{equation*}
for each $n\in\N$, in particluar
$$ \Prob(T_{[n]}=\sfA|T_{[n-1]}=\sfA)=\frac{\Prob(T_{[n]}=\sfA)}{\Prob(T_{[n-1]}=\sfA)}=\frac{\nu}{2}. $$
Assuming the assertion holds for $n-1$ (inductive hypothesis), thus
$$ \Prob(C_{z_{0},...,z_{n-1}}|T_{[n-1]}=\sfA)=\Prob\left(Z_{0}(\sfA)=z_0,...,Z_{n-1}(\sfA)=z_{n-1}\right) $$
for any $(z_{0},...,z_{n-1})\in\N_{0}^{n}$, we infer with the help of the Markov property that
\begin{align*}
 	\Prob&\left((Z_{[0]},...,Z_{[n]})=(z_0,...,z_n)| T_{[n]}=\sfA\right)\\[1ex]
	&=~\frac{\Prob(C^{\sfA}_{z_0,...,z_n})}{\Prob(T_{[n]}=\sfA)}\\[1ex]
	&=~\Prob\left(C_{z_0,...,z_{n-1}}|T_{[n-1]}=\sfA\right)\,
	\Prob(Z_{[n]}=z_n,T_{[n]}=\sfA|C^{\sfA}_{z_0,...,z_{n-1}})\,\frac{\Prob(T_{[n-1]}=
	\sfA)}{\Prob(T_{[n]}=\sfA)}\\[1ex]
	&=~\Prob\left(Z_{0}(\sfA)=z_0,...,Z_{n-1}(\sfA)=z_{n-1}\right)\,
	\frac{\Prob(Z_{[1]}=z_n, T_{[1]}=\sfA | Z_{[0]}=z_{n-1}, 
	T_{[0]}=\sfA)}{\Prob(T_{[n]}=\sfA|T_{[n-1]}=\sfA)}\\
	&=~\Prob\left(Z_{0}(\sfA)=z_0,...,Z_{n-1}(\sfA)=z_{n-1}\right)\\
	&\hspace{4ex}\times\frac{2}{\nu}\left(\frac{p_{\sfA\sfB}}{2}\left(\Prob^{X^{(0)}
	(\sfA, \sfAB)}\right)^{*z_{n-1}}(\{z_n\})+\sum_{i\in\{0,1\}}\frac{p_{\sfA\sfA}}
	{2}\left(\Prob^{X^{(i)}(\sfA, \sfAA)}\right)^{*z_{n-1}}(\{z_n\})\right)\\[1ex]
	&=~\Prob\left(Z_{0}(\sfA)=z_0,...,Z_{n-1}(\sfA)=z_{n-1}\right)\,\Prob\left(Z_
	{[n]}(\sfA)=z_{n}|Z_{[n-1]}(\sfA)=z_{n-1}\right)\\[1ex]
	&=~\Prob\left(Z_{0}(\sfA)=z_0,...,Z_{n}(\sfA)=z_{n}\right).
\end{align*}
This proves the assertion.
\end{proof}

The connection between the distribution of $Z_{n}(\sfA)$ and the expected number of $\sfA$-cells in generation $n$ with a specified number of parasites is stated in the next result.

\begin{Proposition}\label{Prop:dist of Zn(A)}
For all $n\in\N$ and $k\in\N_{0}$,
\begin{equation}\label{Eq.BPRE.EG}
\Prob\left(Z_n(\sfA)=k\right)=\nu^{-n}\,\E\left(\#\{v\in\G_n(\sfA):Z_v=k\}\right),
\end{equation}
in particular
\begin{equation}\label{Eq.BPRE.EG,k>0}
\Prob\left(Z_n(\sfA)>0\right)=\nu^{-n}\,\E\#\G_{n}^{*}(\sfA).
\end{equation}
\end{Proposition}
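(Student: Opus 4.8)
The plan is to derive \eqref{Eq.BPRE.EG} directly from Proposition \ref{prop:random cell line=BPRE} by computing the left-hand side there in terms of the random cell line. First I would write, using the tower property and the fact that $\{T_{[n]}=\sfA\}$ forces the whole line up to generation $n$ to be of type $\sfA$,
\begin{equation*}
  \Prob(Z_n(\sfA)=k) = \Prob\left(Z_{[n]}=k \,\middle|\, T_{[n]}=\sfA\right) = \frac{\Prob(Z_{[n]}=k,\,T_{[n]}=\sfA)}{\Prob(T_{[n]}=\sfA)},
\end{equation*}
valid for $k\in\N_0$ and, in the case $k=0$, trivially $\{Z_{[n]}=0\}\cap\{T_{[n]}=\sfA\}$ still makes sense (an uncontaminated $\sfA$-cell). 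The denominator was already identified in the proof of Proposition \ref{prop:random cell line=BPRE} as $\Prob(T_{[n]}=\sfA)=(\nu/2)^n$.

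Next I would treat the numerator. Since $V_n=U_1\cdots U_n$ is uniformly distributed on $\G_n=\{0,1\}^n$ and independent of the parasitic and type evolution $(T_v,Z_v)_{v\in\T}$, conditioning on $V_n=v$ gives
\begin{equation*}
  \Prob(Z_{[n]}=k,\,T_{[n]}=\sfA) = \sum_{v\in\G_n} \Prob(V_n=v)\,\Prob(Z_v=k,\,T_v=\sfA) = 2^{-n}\sum_{v\in\G_n}\Prob(Z_v=k,\,T_v=\sfA),
\end{equation*}
and the sum on the right is exactly $\E\big(\sum_{v\in\G_n}\1_{\{T_v=\sfA,\,Z_v=k\}}\big)=\E\#\{v\in\G_n(\sfA):Z_v=k\}$. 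Dividing numerator by denominator yields
\begin{equation*}
  \Prob(Z_n(\sfA)=k) = \frac{2^{-n}\,\E\#\{v\in\G_n(\sfA):Z_v=k\}}{(\nu/2)^n} = \nu^{-n}\,\E\#\{v\in\G_n(\sfA):Z_v=k\},
\end{equation*}
which is \eqref{Eq.BPRE.EG}. Finally, \eqref{Eq.BPRE.EG,k>0} follows by summing \eqref{Eq.BPRE.EG} over all $k\ge 1$ (equivalently, replacing the event $\{Z_v=k\}$ by $\{Z_v>0\}$ throughout the argument), since $\sum_{k\ge 1}\#\{v\in\G_n(\sfA):Z_v=k\}=\#\G_n^*(\sfA)$ by definition of $\G_n^*(\sfA)$, and monotone convergence justifies the interchange of sum and expectation.

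I do not anticipate a real obstacle here; the statement is essentially a bookkeeping identity. The one point requiring a little care is the independence of $V_n$ from $(T_v,Z_v)_{v\in\T}$, which is guaranteed because $U=(U_n)_{n\in\N}$ was assumed independent of the parasitic (and type) evolution; this is what legitimates the conditioning step on $V_n=v$. Everything else is the elementary observation that the uniform pick over $\G_n$ turns an expected count into a probability, together with the normalization $\Prob(T_{[n]}=\sfA)=(\nu/2)^n$ borrowed from the preceding proof.
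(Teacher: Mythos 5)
Your proposal is correct and is essentially the paper's own argument read in the reverse direction: the paper starts from $\E\#\{v\in\G_n(\sfA):Z_v=k\}=\sum_{v\in\G_n}\Prob(Z_v=k,T_v=\sfA)=2^n\Prob(Z_{[n]}=k,T_{[n]}=\sfA)$ and then divides by $\Prob(T_{[n]}=\sfA)=(\nu/2)^n$ and invokes Proposition \ref{prop:random cell line=BPRE}, exactly the ingredients you use. The only difference is presentational (you begin from the conditional probability and unpack it, the paper begins from the expected count), so there is nothing of substance to add.
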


\begin{proof}
For all $n,k\in\N$, we find that
\begin{align*}
	\E\left(\#\{v\in\G_n(\sfA):Z_v=k\}\right)
	~&=~\sum_{v\in\G_n}\Prob(Z_v=k, T_v=\sfA)\\[1ex]
	&=~2^n\Prob\left(Z_{[n]}=k, T_{[n]}=\sfA\right)\\[1ex]
	&=~2^n\Prob(T_{[n]}=\sfA)\Prob\left(Z_{[n]}=k| T_{[n]}=\sfA\right)\\[1ex]
	&=~\nu^n\Prob\left(Z_{[n]}=k| T_{[n]}=\sfA\right)\\[1ex]
	&=~\nu^n\Prob\left(Z_n(\sfA)=k\right),
\end{align*}
and this proves the result.
\end{proof}

For $n\in\N$ and $s\in [0,1]$, let 
\begin{equation*}
 	f_n(s|\Lambda) := \E(s^{Z_n(\sfA)}|\Lambda)\quad\text{and}\quad f_n(s) := \E s^{Z_n(\sfA)}=\E f_{n}(s|\Lambda)
\end{equation*}
denote the quenched and annealed generating function of $Z_n(\sfA)$, respectively, where $\Lambda:=(\Lambda_{n})_{n\ge 1}$.
Then the theory of BPRE (see \cite{Athreya:71.1, Athreya:71.2, GeKeVa:03, Smith+Wilkinson:69} for more details) provides us with the following facts: For each $n\in\N$,
\begin{align*}
f_n(s|\Lambda)=g_{\Lambda_{1}}\circ...\circ g_{\Lambda_{n}}(s),
\quad g_{\lambda}(s):=\E(s^{Z_{1}(\sfA)}|\Lambda_{1}=\lambda)=\sum_{n\ge 0}\lambda_{n}s^{n}
\end{align*}
for any distribution $\lambda=(\lambda_{n})_{n\ge 0}$ on $\N_{0}$. Moreover, the $g_{\Lambda_{n}}$ are iid with
\begin{align*}
\E g_{\Lambda_{1}}'(1)&=\E Z_{1}(\sfA)\\
&=\frac{p_{\sfAA}}{\nu}\Big(\mu_{0,\sfA}(\sfAA)+\mu_{1,\sfA}(\sfAA)\Big)+\frac{p_{\sfAB}}{\nu}\mu_{0,\sfA}(\sfAB)=\frac{\gamma}{\nu},
\end{align*}
where
\begin{equation*}
\gamma := \E\calZ_1(\sfA) = p_{\sfAA}\left(\mu_{0,\sfA}(\sfAA)+\mu_{1,\sfA}(\sfAA)\right)+p_{\sfAB}\mu_{0,\sfA}(\sfAB) 
\end{equation*}
denotes the expected total number of parasites in cells of type $\sfA$ in the first generation (recall from \eqref{SA1} that $Z_{\varnothing}=Z_{\varnothing}(\sfA)=1$). As a consequence,
\begin{align*}
	\E(Z_{[n]}|T_{[n]}=\sfA)&=\E Z_n(\sfA)
	=f_n'(1)=\prod_{k=1}^{n}\E g_{\Lambda_{k}}'(1)=\left(\frac{\gamma}{\nu}\right)^n
\end{align*}
for each $n\in\N$. It is also well-known that $(Z_{n}(A))_{n\ge 0}$ dies out a.s., which in terms of $(Z_{[n]})_{n\ge 0}$ means that $\lim_{n\to\infty}\Prob(Z_{[n]}=0|T_{[n]}=\sfA)=1$, iff
\begin{align}\label{Eq.BPRE.Aussterben}
	\E\log g_{\Lambda_{1}}'(1)=\frac{p_{\sfAA}}{\nu}\Big(\log\mu_{0,\sfA}(\sfAA)+\log
	\mu_{1,\sfA}(\sfAA)\Big)+\frac{p_{\sfAB}}{\nu}\log\mu_{0,\sfA}(\sfAB)\le 0.
\end{align}

\subsection{Properties of $\#\G^*_n(\sfA)$ and $\#\G^*_n$:}
We proceed to the statement of a number of results on the asymptotic behavior of $\G^*_n(\sfA)$ and $\G^*_n$ conditioned upon $\Surv(\sfA)$ and $\Surv$, respectively. It turns out that, if the number of parasites tends to infinity, then so does the number of contaminated cells.

\begin{Theorem}\label{Satz.ExplosionInfZellen} ${}$
\begin{enumerate}[(a)]
\item\label{Item.ExplosionInfZellenB} If $\Prob(\Surv(\sfA))>0$ and $p_{\sfAA}>0$, then $\Prob(\#\G^*_n(\sfA)\to\infty|\Surv(\sfA))=1$.
\item\label{Item.ExplosionInfZellenAlle} If $\Prob(\Surv)>0$, then $\Prob(\#\G^*_n\to\infty|\Surv)=1.$
\end{enumerate}
\end{Theorem}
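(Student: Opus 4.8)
The plan is to follow the family tree of contaminated $\sfA$-cells together with a monotone counting functional on it; the statement for $\#\G^{*}_{n}$ will be reduced to the one for $\#\G^{*}_{n}(\sfA)$ and to the single-type result of \cite{Bansaye:08}. Write $M_{n}:=\#\G^{*}_{n}(\sfA)$. As $0$ is absorbing for the $\N_{0}$-valued chain $(\calZ_{n}(\sfA))_{n\ge0}$, one has $\Surv(\sfA)=\{\calZ_{n}(\sfA)\ge1\text{ for all }n\}$, and by the extinction--explosion principle $\calZ_{n}(\sfA)\to\infty$ on $\Surv(\sfA)$. Since an $\sfA$-cell only descends from an $\sfA$-cell and a contaminated cell only from a contaminated one, the set $\calT^{*}:=\bigcup_{n\ge0}\G^{*}_{n}(\sfA)$, endowed with the ancestral edges of $\T$, is a subtree rooted at $\varnothing$ (recall \eqref{SA1}), and by K\"onig's lemma $\{\calT^{*}\text{ infinite}\}=\{M_{n}\ge1\text{ for all }n\}=\Surv(\sfA)$. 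For $v\in\calT^{*}$ let $\calT^{*}_{v}$ be the subtree rooted at $v$, and call $v\in\G^{*}_{n}(\sfA)$ \emph{perpetual} if $\calT^{*}_{v}$ is infinite. For $z\ge1$, let $\pi_{z}$ be the probability that $\calT^{*}$ is infinite when the model starts from one $\sfA$-cell carrying $z$ parasites; adding parasites one at a time yields a monotone coupling, so $z\mapsto\pi_{z}$ is nondecreasing, and since $\pi_{1}=\Prob(\Surv(\sfA))>0$ by assumption we obtain $\pi_{\infty}:=\lim_{z\to\infty}\pi_{z}>0$.

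The key input for (a) is the following branching estimate. With $\mu:=\min\{\mu_{0,\sfA}(\sfAA),\mu_{1,\sfA}(\sfAA)\}>0$ (by \eqref{SA5}) and $p_{\sfAA}>0$, the weak law of large numbers applied to $\sum_{k\le z}X^{(i)}_{k,v}(\sfA,\sfAA)$ for $i\in\{0,1\}$ (only finiteness of means is needed) shows that an $\sfA$-cell with $z$ parasites has, with probability at least $p_{\sfAA}(1-\eps_{z})$ where $\eps_{z}\to0$, both daughters of type $\sfA$ and each carrying $\ge\lfloor\mu z/2\rfloor$ parasites. Hence, by the branching property and monotonicity of $\pi_{\cdot}$, such a cell has two perpetual children with probability at least $\rho_{z}:=p_{\sfAA}(1-\eps_{z})\,\pi_{\lfloor\mu z/2\rfloor}^{2}$, and $\rho_{z}\to p_{\sfAA}\pi_{\infty}^{2}>0$ as $z\to\infty$; trivially also $\Prob(v\text{ perpetual}\mid Z_{v}=z)=\pi_{z}\ge\pi_{\infty}/2$ for all large $z$ and any $\sfA$-cell $v$.

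Now set $R_{n}:=\#\{v\in\G^{*}_{n}(\sfA):v\text{ perpetual}\}\le M_{n}$. A perpetual cell has $\ge1$ perpetual child, and perpetual cells of one generation have disjoint offspring, so $(R_{n})$ is nondecreasing with $R_{n+1}-R_{n}$ equal to the number of generation-$n$ perpetual cells having \emph{two} perpetual children (call these \emph{splits}). Since $M_{n}\ge R_{n}$, it suffices to show $\Prob(E)=0$ for $E:=\{M_{n}\not\to\infty\}\cap\Surv(\sfA)$. On $E$ there is $K$ with $M_{n}\le K$ infinitely often, hence (monotonicity) $R_{n}\le K$ for all $n$ and only finitely many splits occur; let $\tau<\infty$ be the last generation carrying a split, so that beyond generation $\tau$ the perpetual cells form $L:=R_{\tau}$ disjoint infinite paths (``spines'') with finite side-bushes attached. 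Because $\calZ_{n}(\sfA)\to\infty$ while $M_{n}\le K$ infinitely often, there are infinitely many generations $\sigma$ containing a ``heavy'' cell with $\ge\calZ_{\sigma}(\sfA)/K\to\infty$ parasites. If infinitely many heavy cells lie in side-bushes, then revealing each (finite, hence of bounded height) bush before selecting the next heavy bush-cell beyond it produces cells $w_{1},w_{2},\dots$ in pairwise disjoint subtrees that are non-perpetual on $E$ yet satisfy $\Prob(w_{i}\text{ perpetual}\mid Z_{w_{i}})=\pi_{Z_{w_{i}}}\ge\pi_{\infty}/2$ for infinitely many $i$. Otherwise the heavy cells eventually lie on spines; along a spine visited by heavy cells infinitely often, the branching estimate (together with a first conditional Borel--Cantelli step for the event, measurable at generation $\sigma+1$, that the heavy cell has two contaminated $\sfA$-children) yields, at infinitely many such generations, a contaminated $\sfA$-child $w$ off the spine with $Z_{w}\ge\lfloor\mu Z_{v}/2\rfloor$; on $E$ this $w$ is non-perpetual (the spine cell has only one perpetual child), and $\calT^{*}_{w}$ is disjoint from the spine's continuation and hence from the later such cells, so again one obtains $w_{1},w_{2},\dots$ in pairwise disjoint subtrees, non-perpetual on $E$, with $\Prob(w_{i}\text{ perpetual}\mid Z_{w_{i}})\ge\pi_{\infty}/2$ for infinitely many $i$. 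In either case the events $\{w_{i}\text{ perpetual}\}$ are conditionally independent given the $(Z_{w_{i}})$ (their subtrees being disjoint) and their conditional probabilities sum to $+\infty$, so the second Borel--Cantelli lemma forces some $w_{i}$ to be perpetual, contradicting non-perpetuality on $E$. Thus $\Prob(E)=0$ and (a) follows. I expect this last step to be the main obstacle: one must show that the parasites, which grow without bound by extinction--explosion, accumulate near the perpetual skeleton, and then turn the heuristic ``a heavy cell starts a perpetual subtree with non-vanishing probability'' into a rigorous contradiction; since ``perpetual'' is a tail event, conditional Borel--Cantelli cannot be applied directly but must be run along cells in pairwise disjoint subtrees, which requires the spine/side-bush description of an infinite tree of bounded width and a careful choice of filtration.

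For (b) I would use $\#\G^{*}_{n}=\#\G^{*}_{n}(\sfA)+\#\G^{*}_{n}(\sfB)$ and $\Surv=\Surv(\sfA)\cup\Surv(\sfB)$. On $\Surv(\sfB)$, the $\sfB$-cells and their parasites form a superposition of independent single-type models in the sense of \cite{Bansaye:08}, one per founding $\sfB$-cell; at least one is non-extinctive on $\Surv(\sfB)$, so $\#\G^{*}_{n}(\sfB)\to\infty$ there by the corresponding result of \cite{Bansaye:08}. On $\Surv(\sfA)$ with $p_{\sfAA}>0$, invoke (a). If $p_{\sfAA}=0$, then $(\#\G_{n}(\sfA))$ is a Galton--Watson process with mean $p_{\sfAB}$, so by \eqref{SA2} and standard theory $\Surv(\sfA)$ is void unless $p_{\sfAB}=1$; in the latter case there is exactly one $\sfA$-cell per generation whose parasite number tends to $\infty$ on $\Surv(\sfA)$, and since \eqref{SA5} then forces $\Prob(X^{(1)}(\sfA,\sfAB)\ge1)>0$ and $\mu_{0,\sfB},\mu_{1,\sfB}>0$, this cell feeds ever larger parasite numbers into $\sfB$-cells and one again concludes $\#\G^{*}_{n}(\sfB)\to\infty$. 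In all cases $\#\G^{*}_{n}\to\infty$ on $\Surv$, which proves (b).
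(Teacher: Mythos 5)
Your part (a) breaks down at its decisive step, the Borel--Cantelli contradiction, and it is exactly the point you flag yourself. The cells $w_i$ are selected using tail information about their own subtrees: ``lies in a side-bush'' means precisely that $w_i$ is non-perpetual, and ``the contaminated $\sfA$-child off the spine'' requires knowing which child's contaminated subtree is infinite. Conditionally on such a selection, the probability that $w_i$ is perpetual is $0$, not $\pi_{Z_{w_i}}\ge\pi_\infty/2$; the identity $\Prob(w_i\ \text{perpetual}\mid Z_{w_i})=\pi_{Z_{w_i}}$ is only valid when the selection is measurable with respect to information that does not look into $w_i$'s subtree, and neither the ``reveal each bush first'' device (to know a bush is a bush you must already know its subtree is finite) nor the spine case supplies such a filtration -- even your ``first conditional Borel--Cantelli step'' is indexed by spine generations, which are not stopping times. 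Consequently no contradiction with $E$ is obtained: on $E$ the $w_i$ are non-perpetual by construction, and the claimed conditional independence given $(Z_{w_i})$ is destroyed by the anticipating selection, so the second Borel--Cantelli lemma cannot be invoked. A non-anticipating repair is needed and is really the whole content of the statement: for instance, take adapted stopping times $\sigma_j$ at which $\#\G^*_{\sigma_j}(\sfA)\le K$ and some cell carries at least $N$ parasites (infinitely many such times exist on $\{\liminf_n\#\G^*_n(\sfA)\le K\}\cap\Surv(\sfA)$ since $\calZ_n(\sfA)\to\infty$), and show that such a heavy cell produces, with conditional probability $\ge\delta_K>0$ given the past, at least $K{+}1$ disjoint contaminated $\sfA$-lines surviving forever (using $p_{\sfAA}>0$, the law of large numbers and $\pi_\infty>0$); this event contradicts $\liminf_n\#\G^*_n(\sfA)\le K$ directly, with no need to certify any cell as non-perpetual. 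That is essentially Bansaye's proof of his Theorem 4.1, which the paper simply cites for (a) (``the proof is the same and thus omitted''), reserving its own work for (b).

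Part (b) is closer to the mark, but your opening claim on $\Surv(\sfB)$ -- that at least one founding single-type $\sfB$-model is non-extinctive there -- is false as stated: if $\mu_{0,\sfB}+\mu_{1,\sfB}\le1$, every founding $\sfB$-subpopulation dies out a.s., yet $\Surv(\sfB)$ can still occur because surviving $\sfA$-parasites keep seeding fresh contaminated $\sfB$-cells. That scenario is contained in $\Surv(\sfA)$, which you treat separately, so the argument is repaired by partitioning $\Surv$ into $\Surv(\sfA)$ and $\Surv(\sfB)\setminus\Surv(\sfA)$ (on the latter, after the a.s.\ finite extinction time of the $\sfA$-parasites, the finitely many contaminated $\sfB$-cells found independent single-type processes, at least one of which must survive, and Bansaye's Theorem 4.1 applies). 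With this fix, your event-based case split parallels the paper's parameter-based split ($\mu_{0,\sfB}+\mu_{1,\sfB}>1$ versus $\le1$), and your treatment of $p_{\sfAA}=0$, $p_{\sfAB}=1$ is the paper's own argument in outline, though the final assertion that ever larger parasite input forces $\#\G^*_n\to\infty$ should be quantified as in the paper (a union bound over the $\sfB$-subtrees rooted at the cells $0^{n-k}1$, $k\le K$) rather than asserted.
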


\begin{proof}
The proof of assertion \eqref{Item.ExplosionInfZellenB} is the same as for Theorem 4.1 in \cite{Bansaye:08} and thus omitted. 

\vspace{.2cm}
\eqref{Item.ExplosionInfZellenAlle} We first note that, given $\Surv$, a contaminated $\sfB$-cell is eventually created with probability one and then spawns a single-type cell process (as $\E\calZ_1(\sfB)>0$ by \eqref{SA5}). Hence the assertion follows again from Theorem 4.1 in \cite{Bansaye:08} if $\mu_{\sfB}:=\mu_{0,\sfB}+\mu_{1,\sfB}>1$.

Left with the case $\mu_{\sfB}\le 1$, it follows that
$$ \Prob(\Surv(\sfA)|\Surv)=1,$$
for otherwise, given $\Surv$, only $\sfB$-parasites would eventually be left with positive probability which however would die out almost surely. Next, $p_{\sfAA}>0$ leads back to  
\eqref{Item.ExplosionInfZellenB} so that it remains to consider the situation when $p_{\sfAA}=0$. In this case there is a single line of $\sfA$-cells, namely $\varnothing\to 0\to 00\to ...$, and $(\calZ_n(\sfA))_{n\ge 0}$ is an ordinary Galton-Watson branching process 
tending $\Prob(\cdot|\Surv(\sfA))$-a.s.\ to infinity. For $n,k\in\N$, let
\begin{equation*}
	\calZ_k(n,\sfB) := \sum_{v\in\G_{n+k+1}(\sfB):v|n+1=0^{n}1}Z_v
\end{equation*}
denote the number of $\sfB$-parasites at generation $k$ sitting in cells of the subpopulation stemming from the cell $0^{n}1$, where $0^n:=0...0$ ($n$-times). Using $p_{\sfAB}=1$ and \eqref{SA5}, notably $\mu_{1,\sfA}(\sfAB)>0,\mu_{0,\sfB}>0$ and $\mu_{1,\sfB}>0$, it
is readily seen that
	\begin{equation*}
	 	\Prob\left(\lim_{n\to\infty}\calZ_0(n-k,\sfB)=\infty|\Surv(\sfA)\right)=1
	\end{equation*}
and thus
	\begin{equation*}
	 	\Prob\left(\lim_{n\to\infty}\calZ_K(n-k,\sfB)=0|\Surv(\sfA)\right)=0
	\end{equation*}
for all $K\in\N$ and $k\leq K$. Consequently,
	\begin{align*}
	 	&\Prob\left( \liminf_{n\to\infty}\#\G^*_n\leq K|\Surv(\sfA)\right)\\
		&\hspace{1cm}\leq\ \Prob\left(\lim_{n\to\infty}\max_{0\le k\le K}
		\calZ_k(n-k,\sfB)=0|\Surv(\sfA)\right)\\
		&\hspace{1cm}\leq\ \sum_{k=0}^{K}
		\Prob\left(\lim_{n\to\infty}\calZ_K(n-k,\sfB)=0|\Surv(\sfA)\right)\\
		&\hspace{1cm}=~0
	\end{align*}
for all $K\in\N$
\end{proof}

The next result provides us with the geometric rate at which the number of contaminated cells tends to infinity.

\begin{Theorem}\label{Satz.ErholungB} $(\nu^{-n}\#\G^*_n(\sfA))_{n\ge 0}$ is a non-negative supermartingale and therefore a.s.\ convergent to a random variable $L(\sfA)$ as $n\to\infty$. Furthermore, 
 \begin{enumerate}[(a)]
	\item\label{Item.ErholungB.FSExtinction} $L(\sfA)=0$ a.s.\ iff $\E\log g_{\Lambda_
	{1}}'(1)\leq 0$ or $\nu\leq1$
	\item\label{Item.ErholungB.Extinction} $\Prob(L(\sfA)=0)<1$ implies $\{L(\sfA)=0\}=
	\Ext(\sfA)$ a.s.
\end{enumerate}
\end{Theorem}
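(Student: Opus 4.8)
\textbf{Proof plan for Theorem \ref{Satz.ErholungB}.}

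The plan is to exploit Proposition \ref{Prop:dist of Zn(A)}, which identifies $\nu^{-n}\E\#\G_n^*(\sfA)$ with $\Prob(Z_n(\sfA)>0)$, so that the behaviour of the limit $L(\sfA)$ is governed by the survival probability of the associated BPRE. First I would verify the supermartingale property: conditioning on the generation-$n$ configuration and using that each type-$\sfA$ cell with at least one parasite produces on average $\le\nu$ type-$\sfA$ contaminated daughter cells (an uncontaminated cell produces none, a contaminated one produces at most the expected number $\nu$ of type-$\sfA$ daughters, irrespective of parasite sharing), one gets $\E(\#\G_{n+1}^*(\sfA)\mid\mathcal F_n)\le\nu\,\#\G_n^*(\sfA)$; here $\mathcal F_n$ is the $\sigma$-field generated by the types and parasite counts up to generation $n$. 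Nonnegativity is clear, so the martingale convergence theorem yields an a.s.\ limit $L(\sfA)\ge 0$.

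For part (a), the ``if'' direction splits into two cases. If $\nu\le 1$, then $\#\G_n(\sfA)$ is a (sub)critical Galton--Watson process, hence $\#\G_n^*(\sfA)\le\#\G_n(\sfA)\to 0$ a.s., forcing $L(\sfA)=0$. If $\E\log g_{\Lambda_1}'(1)\le 0$, then by the classical BPRE extinction criterion (the display \eqref{Eq.BPRE.Aussterben}) the process $(Z_n(\sfA))_{n\ge 0}$ dies out a.s., so $\Prob(Z_n(\sfA)>0)\to 0$; combined with \eqref{Eq.BPRE.EG,k>0} this gives $\nu^{-n}\E\#\G_n^*(\sfA)\to 0$, and since $\nu^{-n}\#\G_n^*(\sfA)\to L(\sfA)$ a.s.\ with the sequence being $L^1$-bounded by the supermartingale property, Fatou (or uniform integrability failing upward) yields $\E L(\sfA)\le\liminf\nu^{-n}\E\#\G_n^*(\sfA)=0$, whence $L(\sfA)=0$ a.s. For the ``only if'' direction, suppose $\nu>1$ and $\E\log g_{\Lambda_1}'(1)>0$; then the BPRE survives with positive probability, and I would show $L(\sfA)>0$ with positive probability. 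The natural route is a second-moment / $L^2$-boundedness argument for $\nu^{-n}\#\G_n^*(\sfA)$ along the lines of the Kesten--Stigum approach, or alternatively a size-biasing (spine) argument: decompose $\#\G_{n}^*(\sfA)$ along the random $\sfA$-cell line and use that on the BPRE survival event the spinal parasite count stays positive, which propagates to a linear-in-$\nu^n$ number of contaminated $\sfA$-cells. This should give $\E L(\sfA)>0$.

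For part (b), assume $\Prob(L(\sfA)=0)<1$. The inclusion $\Ext(\sfA)\subseteq\{L(\sfA)=0\}$ is immediate since on $\Ext(\sfA)$ eventually $\#\G_n^*(\sfA)=0$. For the reverse inclusion I would argue that $\{L(\sfA)=0\}\subseteq\Ext(\sfA)$ up to a null set by a conditional-independence / branching-at-a-stopping-time argument: on $\Surv(\sfA)$ the number of contaminated $\sfA$-cells is, by Theorem \ref{Satz.ExplosionInfZellen}\eqref{Item.ExplosionInfZellenB} (when $p_{\sfAA}>0$), tending to infinity, and each newly contaminated $\sfA$-cell independently founds a subtree whose own normalized contaminated-cell count has a nondegenerate positive limit with probability bounded away from $0$ (the constant from part (a)); a Borel--Cantelli / branching-property argument then forces $L(\sfA)>0$ a.s.\ on $\Surv(\sfA)$. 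The case $p_{\sfAA}=0$ has to be handled separately: there $\#\G_n^*(\sfA)\in\{0,1\}$ for all $n$ (a single $\sfA$-line), so $\nu=1+(p_{\sfAA}-p_{\sfBB})\le 1$, putting us in the regime $L(\sfA)=0$ a.s.\ of part (a), so the hypothesis $\Prob(L(\sfA)=0)<1$ is vacuous and there is nothing to prove.

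The main obstacle I anticipate is the ``only if'' half of (a): showing that $L(\sfA)$ is genuinely nondegenerate when $\nu>1$ and $\E\log g_{\Lambda_1}'(1)>0$. Unlike the classical single-type Kesten--Stigum setting, here the martingale $\nu^{-n}\#\G_n^*(\sfA)$ counts \emph{contaminated} cells, so its increments couple the type-branching (rate $\nu$) with the parasite BPRE in a way that makes a direct $L^2$ estimate delicate; I expect one must run the second-moment computation carefully, controlling $\E(\#\G_n^*(\sfA))^2$ via the pair of cell lines and the BPRE survival probability, or instead invoke a change-of-measure argument making the spine an $\sfA$-line with the size-biased environment and then use the recurrence of the associated random walk $\sum\log g_{\Lambda_k}'(1)$ under the positive-drift assumption. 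The remaining steps (supermartingale property, the $\nu\le1$ case, the $\Ext(\sfA)$ identification) are routine given the results already established in the excerpt.
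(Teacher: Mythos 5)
Your treatment of part (a) has a genuine gap in the ``only if'' direction: when $\nu>1$ and $\E\log g_{\Lambda_1}'(1)>0$ you do not prove that $L(\sfA)$ is nondegenerate, but only list two candidate strategies (a second-moment computation for $\#\G^*_n(\sfA)$, or a spine/change-of-measure argument) and flag the step as the main obstacle; your Fatou estimate $\E L(\sfA)\le\liminf_n\nu^{-n}\E\#\G^*_n(\sfA)$ only yields the ``if'' half. The observation that closes the step, and which the paper uses, is a one-line domination: for $\nu>1$ the majorant $(\nu^{-n}\#\G_n(\sfA))_{n\ge0}$ is the normalized Galton--Watson martingale of \emph{all} $\sfA$-cells, whose offspring law is supported on $\{0,1,2\}$ and hence has finite variance, so it is $L^2$-bounded and uniformly integrable; since $0\le\nu^{-n}\#\G^*_n(\sfA)\le\nu^{-n}\#\G_n(\sfA)$, the supermartingale is uniformly integrable as well, and therefore
\begin{equation*}
\E L(\sfA)\;=\;\lim_{n\to\infty}\nu^{-n}\,\E\#\G^*_n(\sfA)\;=\;\lim_{n\to\infty}\Prob(Z_n(\sfA)>0)
\end{equation*}
by \eqref{Eq.BPRE.EG,k>0}, which is strictly positive precisely when the BPRE is supercritical. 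In particular the ``delicate'' second-moment analysis you anticipate is unnecessary (indeed $\E(\#\G^*_n(\sfA))^2\le\E(\#\G_n(\sfA))^2=O(\nu^{2n})$ comes for free from the same domination), and no spine construction is needed.

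The remaining parts of your proposal are essentially the paper's argument: the supermartingale property and the cases $\nu\le1$ and $\E\log g_{\Lambda_1}'(1)\le0$ are handled the same way, and your branching/independence argument for (b) is what the paper formalizes via the stopping times $\tau_n=\inf\{m:\#\G^*_m(\sfA)\ge n\}$, the estimate $\Prob(L(\sfA)=0)\le\Prob(L(\sfA)=0)^n+\Prob(\tau_n=\infty)$ (using independent subtrees rooted at the $\ge n$ contaminated $\sfA$-cells and monotonicity in the initial parasite number), and $\lim_n\Prob(\tau_n=\infty)=\Prob(\Ext(\sfA))$ from Theorem \ref{Satz.ExplosionInfZellen}\eqref{Item.ExplosionInfZellenB}; your separate remark that $p_{\sfAA}=0$ forces $\nu\le1$ and makes the hypothesis of (b) vacuous is correct and consistent with the paper's implicit use of $p_{\sfAA}>0$ there.
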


\begin{proof}
That $(\nu^{-n}\#\G^*_n(\sfA))_{n\ge 0}$ forms a supermartingale follows by an easy calculation and therefore a.s. convergence to an integrable random variable $L(\sfA)$ is ensured. This supermartingale is even uniformaly integrable in the case $\nu>1$, which follows because the obvious majorant $(\nu^{-n}\#\G_n(\sfA))_{n\ge 0}$ is a normalized Galton-Watson branching process having a reproduction law with finite variance and is thus $L^2$-bounded (see Section I.6 in \cite{Athreya+Ney:72}). Consequently,  $(\nu^{-n}\#\G^*_n(\sfA))_{n\geq0}$ is uniformaly integrable and
\begin{align}\label{eq:EL(A)}
 	\E L(\sfA) = \lim_{n\to\infty}\E\frac{\#\G_{n}^{*}(\sfA)}{\nu^n}
		= \lim_{n\to\infty}\Prob(Z_n(\sfA)>0),
\end{align}
the last equality following from \eqref{Eq.BPRE.EG,k>0} in Proposition \ref{Prop:dist of Zn(A)}.

\vspace{.2cm}	
As for \eqref{Item.ErholungB.FSExtinction}, $L(\sfA)=0$ a.s.\ occurs iff either $\nu\le 1$, in which case $\#\G_n^*(\sfA)\le\#\G_{n}(\sfA)=0$ eventually, or $\nu>1$ and $\E\log g_{\Lambda_{1}}'(1)\leq 0$, in which case almost certain extinction of $(Z_{n}(\sfA))_{n\ge 0}$ in combination with \eqref{eq:EL(A)} yields the conclusion.

\vspace{.2cm}
\eqref{Item.ErholungB.Extinction} Defining $\tau_n = \inf\{m\in\N:\#\G^*_m(\sfA)\geq n\}$,
we find that
\begin{align*}
	 \Prob(L(\sfA)=0) &\leq~ \Prob(L(\sfA)=0|\tau_n<\infty)+\Prob(\tau_n=\infty)\\
	&\leq\ \Prob\left(\bigcap_{k=1}^{\#\G^*_{\tau_n}(\sfA)}\{\#\G^*_{m,k}(\sfA)/\nu^m
	\to0\}\bigg|\tau_n<\infty\right)+\Prob(\tau_n=\infty)\\
	&\leq\ \Prob(L(\sfA)=0)^n+\Prob(\tau_n=\infty)
\end{align*}
for all $n\ge 1$, where the $\#\G^*_{m,k}(\sfA)$, $k\ge 1$, are independent copies of $\#\G^*_m(\sfA)$. Since $\Prob(L(\sfA)=0)<1$, Theorem \ref{Satz.ExplosionInfZellen} implies
\begin{equation*}
	 \Prob(L(\sfA)=0)\leq \lim_{n\to\infty}\Prob(\tau_n=\infty)=\Prob\left(\sup_{n\ge 1}
	 \#\G^*_{n}(\sfA)<\infty\right)=\Prob(\Ext(\sfA))
	\end{equation*}
which in combination with $\Ext(\sfA)\subset\{L(\sfA)=0\}$ a.s. proves the assertion.
\end{proof}

Since $\nu<2$ and $(\nu^{-n}\#\G_{n}(\sfA))_{n\ge 0}$ is a nonnegative, a.s.\ convergent martingale, we see that $2^{-n}\#\G^{*}_n(\sfA)\le 2^{-n}\#\G_n(\sfA)\to 0$ a.s. and therefore
\begin{equation*}
 	\frac{\#\G^*_n}{2^n}\ \simeq\ \frac{\#\G^*_n(\sfB)}{2^n},\quad\text{as }n\to\infty.
\end{equation*}
that is, the asymptotic proportion of all contaminated cells is the same as the asymptotic proportion of contaminated $\sfB$-cells. Note also that
\begin{equation}\label{eq:T[n]to zero}
\Prob(T_{[n]}=\sfA)=\E\left(\frac{\#\G_n(\sfA)}{2^n}\right)\to 0,\quad\text{as }n\to\infty.
\end{equation}
Further information is provided by the next result.

\begin{Theorem}\label{Satz.Erholung}
There exists a r.v. $L\in[0,1]$ such that $\#\G^*_n/2^n\to L$ a.s. Furthermore,
\begin{enumerate}[(a)]
	\item\label{Item.Erholung.AlleFSExtinction} $L=0$ a.s. iff 
	$\mu_{0,\sfB}\mu_{1,\sfB}\leq1$.
	\item\label{Item.Erholung.AlleExtinction} If $\Prob(L=0)<1$, then $\{L=0\}=\Ext$ 
	a.s.
\end{enumerate}
\end{Theorem}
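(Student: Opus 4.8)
The plan is to mimic the structure of Theorem \ref{Satz.ErholungB}, but now working with the whole population of contaminated cells, whose asymptotic density is governed by the $\sfB$-subpopulations spawned along the way. First I would establish that $(2^{-n}\#\G^*_n)_{n\ge 0}$ is a nonnegative supermartingale: conditioning on the configuration of cell types and parasite numbers in generation $n$, each contaminated cell in generation $n$ produces on average at most two contaminated daughter cells, so $\E(\#\G^*_{n+1}\mid\mathcal F_n)\le 2\,\#\G^*_n$. Nonnegativity and the supermartingale property give a.s.\ convergence to some $L\ge 0$, and since $\#\G^*_n\le 2^n$ we automatically get $L\in[0,1]$; this also disposes of the boundedness/integrability issues that required a separate argument in Theorem \ref{Satz.ErholungB}.

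For part \eqref{Item.Erholung.AlleFSExtinction}, the point is that the large-$n$ behavior of $\#\G^*_n$ is, by the remark preceding the theorem, the same as that of $\#\G^*_n(\sfB)$, and a contaminated $\sfB$-cell initiates a single-type Bansaye-type process. The relevant $L^2$/Kesten--Stigum-type dichotomy for that single-type model (as in \cite{Bansaye:08}) says that the normalized number of contaminated cells in a $\sfB$-subtree has strictly positive limit on its survival set iff $\mu_{0,\sfB}\mu_{1,\sfB}>1$, and is identically $0$ a.s.\ iff $\mu_{0,\sfB}\mu_{1,\sfB}\le 1$. If $\mu_{0,\sfB}\mu_{1,\sfB}\le 1$, then every $\sfB$-subtree contributes a vanishing proportion $2^{-n}\#\G^*_n(\sfB\text{-subtree})\to 0$; combined with $2^{-n}\#\G^*_n(\sfA)\to 0$ (already noted in the excerpt, since $\nu<2$) and a summation/dominated-convergence argument over the at most countably many $\sfB$-subtrees created, this forces $L=0$ a.s. Conversely, if $\mu_{0,\sfB}\mu_{1,\sfB}>1$, then on $\Surv$ a contaminated $\sfB$-cell is eventually created (as argued in the proof of Theorem \ref{Satz.ExplosionInfZellen}\eqref{Item.ExplosionInfZellenAlle}) and the $\sfB$-subtree it spawns survives with positive conditional probability, on which event its contribution to $2^{-n}\#\G^*_n$ has a strictly positive limit; hence $\Prob(L>0)>0$, i.e.\ $L=0$ a.s.\ fails.

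For part \eqref{Item.Erholung.AlleExtinction}, I would argue exactly as in Theorem \ref{Satz.ErholungB}\eqref{Item.ErholungB.Extinction} using a branching/regeneration argument: the inclusion $\Ext\subset\{L=0\}$ is immediate, so it remains to show $\Prob(L=0)\le\Prob(\Ext)$ when $\Prob(L=0)<1$. Introduce the stopping time $\tau_n=\inf\{m:\#\G^*_m\ge n\}$; on $\{\tau_n<\infty\}$ the event $\{L=0\}$ entails that each of the $\#\G^*_{\tau_n}\ge n$ contaminated subtrees rooted at generation $\tau_n$ has normalized limit $0$, and these subtrees are conditionally independent. Since $\Prob(L=0)<1$ one gets $\Prob(L=0)\le\Prob(L=0)^n+\Prob(\tau_n=\infty)$, and letting $n\to\infty$ together with Theorem \ref{Satz.ExplosionInfZellen}\eqref{Item.ExplosionInfZellenAlle} (which gives $\Prob(\tau_n=\infty)\to\Prob(\sup_n\#\G^*_n<\infty)=\Prob(\Ext)$) yields $\Prob(L=0)\le\Prob(\Ext)$.

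The main obstacle I expect is making the reduction ``$\#\G^*_n$ behaves like $\#\G^*_n(\sfB)$'' fully rigorous in the proof of \eqref{Item.Erholung.AlleFSExtinction}: one must control the superposition of infinitely many $\sfB$-subtrees born at different (random) generations, each started from a random number of parasites, and exchange limit and sum. The cleanest route is probably to fix a large generation $m_0$, split $\#\G^*_n$ for $n>m_0$ into contributions from $\sfB$-cells already present at $m_0$, from $\sfA$-cells at $m_0$ (whose total future contribution is dominated by $\#\G_n(\sfA)/2^n\to 0$), and to use the single-type result plus monotonicity in $m_0$; care is also needed because the single-type results of \cite{Bansaye:08} presuppose a deterministic starting parasite number, so one should condition on $\mathcal F_{m_0}$ and invoke them on each subtree separately.
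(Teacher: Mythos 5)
Your overall architecture (a.s.\ convergence of $2^{-n}\#\G^*_n$, single-type input from \cite{Bansaye:08} for part (a), and the $\tau_n$-regeneration argument for part (b)) is sound, and much of it matches the paper: the existence step is even simpler than a supermartingale argument, since $\#\G^*_{n+1}\le 2\,\#\G^*_n$ holds surely, so $2^{-n}\#\G^*_n$ is decreasing pathwise; and your part (b) is exactly the argument the paper points to (its proof of Theorem \ref{Satz.ErholungB}\eqref{Item.ErholungB.Extinction}), up to the small extra observation that subtrees rooted at contaminated $\sfB$-cells also satisfy $\Prob_{z,\sfB}(\text{subtree limit}=0)<1$ once $\mu_{0,\sfB}\mu_{1,\sfB}>1$, so that a geometric bound $c^n$, $c<1$, is really available for the product over the $\ge n$ subtrees. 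For part (a), however, your route differs genuinely from the paper's: the paper makes no pathwise subtree decomposition at all, but works with $\E L=\lim_n\E(2^{-n}\#\G^*_n)=\lim_n\Prob(Z_{[n]}>0)$ (using monotonicity and the elementary identity $\E\#\G^*_n=2^n\Prob(Z_{[n]}>0)$), splits at a fixed generation $m$ with $\Prob(T_{[m]}=\sfA)\le\eps$, and only needs the extinction criterion for the single-type random cell line started from a $\sfB$-cell, i.e.\ \cite[Prop.\ 2.1]{Bansaye:08}; your version needs the stronger a.s.\ dichotomy for the proportion of contaminated cells in each $\sfB$-subtree, plus an interchange of limit and sum.

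That interchange is where your sketch has a concrete flaw: the claim that the future contribution of the $\sfA$-cells alive at generation $m_0$ is ``dominated by $\#\G_n(\sfA)/2^n$'' is false. Those $\sfA$-cells found new $\sfB$-subtrees in generations $>m_0$, and the contaminated $\sfB$-cells in these subtrees are descendants of $\sfA$-cells at $m_0$ but are not counted by $\#\G_n(\sfA)$; so the proposed bound does not address precisely the superposition problem you yourself single out as the main obstacle. The step can be repaired: bound the contribution of each $\sfA$-cell at $m_0$ crudely by all of its $2^{n-m_0}$ descendants, giving for every fixed $m_0$ that $L\le\lim_{n\to\infty}2^{-n}\#\{\text{contaminated descendants at }n\text{ of }\sfB\text{-cells at }m_0\}+2^{-m_0}\#\G_{m_0}(\sfA)=2^{-m_0}\#\G_{m_0}(\sfA)$ a.s.\ (the first limit vanishes by the single-type result applied, conditionally on the configuration at $m_0$, to the finitely many contaminated $\sfB$-cells present there), and then let $m_0\to\infty$, using $2^{-m_0}\#\G_{m_0}(\sfA)=(\nu/2)^{m_0}\,\nu^{-m_0}\#\G_{m_0}(\sfA)\to0$ a.s.\ since $\nu<2$. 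With this correction (your converse direction, via one eventually created contaminated $\sfB$-cell whose supercritical subtree retains a positive proportion with positive probability, is fine), your proof of (a) goes through; the paper's expectation argument simply sidesteps the interchange issue altogether.
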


\begin{proof}
The existence of $L$ follows because $2^{-n}\#\G^*_n$ is obviously decreasing. 
As for \eqref{Item.Erholung.AlleFSExtinction}, suppose first that $\mu_{0,\sfB}\mu_{1,\sfB}\leq1$ and note that this is equivalent to almost sure extinction of a random $\sfB$-cell line, i.e.
$$ \lim_{n\to\infty}\Prob(Z_{[n]}>0|Z_{\varnothing}=k,T_{[0]}=\sfB)=0 $$
for any $k\in\N$. This follows because, starting from a $\sfB$-cell, we are in the one-type model studied in \cite{Bansaye:08}. There it is stated that $(Z_{[n]})_{n\geq0}$ forms a BPRE which dies out a.s.\ iff $\mu_{0,\sfB}\mu_{1,\sfB}\leq1$ (see \cite[Prop.\ 2.1]{Bansaye:08}). Fix any $\varepsilon>0$ and choose $m\in\N$ so large that $\Prob(T_{[m]}=\sfA)\leq\varepsilon$, which is possible by \eqref{eq:T[n]to zero}. Then, by the monotone convergence theorem, we find that for sufficiently large $K\in\N$
	\begin{align*}
	 	\E L ~&=~ \lim_{n\to\infty}\Prob(Z_{[n+m]}>0)\\
		&\leq~ \lim_{n\to\infty}\Prob(Z_{[n+m]}>0, T_{[m]}=\sfB) + \varepsilon\\
		&\leq~ \lim_{n\to\infty}\sum_{k\ge 0}\Prob(Z_{[n+m]}>0, Z_{[m]}=k, 
		T_{[m]}=\sfB) + \varepsilon\\
		&\leq~ \lim_{n\to\infty}\sum_{k=0}^{K}\Prob(Z_{[n]}>0|Z_{[0]}=k, 
		T_{[0]}=\sfB) + 2\varepsilon\\
		&\leq~ 2\varepsilon
	\end{align*}
and thus $\E L=0$. For the converse, note that
\begin{align*}
	 	0 ~&=~ \E L\\
		&=~ \lim_{n\to\infty}\Prob(Z_{[n+1]}>0)\\
		&\geq~ \lim_{n\to\infty}\Prob(Z_{[1]}>0, T_{[1]}=\sfB)\Prob(Z_{[n]}>0|T_{[0]}=\sfB)
	\end{align*}
implies $0=\lim_{n\to\infty}\Prob(Z_{[n]}>0|T_{[0]}=\sfB)$ and thus $\mu_{0,\sfB}\mu_{1,\sfB}\leq1$ as well.

\vspace{.2cm}
The proof of \eqref{Item.Erholung.AlleExtinction} follows along similar lines as Theorem \ref{Satz.ErholungB}\eqref{Item.ErholungB.Extinction} and is therefore omitted.
\end{proof}

\subsection{Properties of $\calZ_n(\sfA)$}

We continue with some results on $\calZ_n(\sfA)$, the number of $\sfA$-parasites at generation $n$, and point out first that $(\gamma^{-n}\calZ_n(\sfA))_{n\ge 0}$ constitutes a nonnegative, mean one martingale which is a.s.\ convergent to a finite random variable $W$. In particular, $\E\calZ_n(\sfA)=\gamma^n$ for all $n\in\N_{0}$. If $\E\calZ_1(\sfA)^2<\infty$, $\gamma>1$ and
$$ \hat\gamma\ :=\ \nu\,\E g^{\prime}_{\Lambda_1}(1)^2=p_{\sfAA}\left(\mu^2_{0,\sfA}(\sfAA)+\mu^2_{1,\sfA}(\sfAA)\right)+p_{\sfAB}\mu^2_{0,\sfA}(\sfAB)\ \leq\ \gamma, $$
then the martingale is further $L^{2}$-bounded as may be assessed by a straightforward but tedious computation. The main difference between a standard Galton-Watson process and the $\sfA$-parasite process $(\calZ_n(\sfA))_{n\geq0}$ is the dependence of the offspring numbers of parasites living in the same cell, which (by some elementary calculations) leads to an additional term in the recursive formula for the variance, viz. 
\begin{equation*}
	\Var\left(\calZ_{n+1}(\sfA)\right)~=~\gamma^2\,\Var\left(\calZ_{n}(\sfA)\right)+\gamma^n\,\Var(\calZ_1(\sfA))+c_{1} \nu^n f_{n}''(1)
\end{equation*}
for all $n\geq0$ and some finite positive constant $c_{1}$. Here it should be recalled that $f_n(s)=\E s^{Z_n(\sfA)}$. Consequently, by calculating the second derivative of $f_{n}$ and using $\hat\gamma\le\gamma$, we obtain
\begin{equation*}
 f^{\prime\prime}_n(1)\ =\ \E g^{\prime\prime}_{\Lambda_1}(1) \sum_{i=1}^n\left(\frac{\hat\gamma}{\nu}\right)^{n-i}\left(\frac{\gamma}{\nu}\right)^{i-1}\ \leq\ c_{2} n \left(\frac{\gamma}{\nu}\right)^n
\end{equation*}
for some finite positive constant $c_2$. A combination of this inequality with the above recursion for the variance of $\calZ_{n}(\sfA)$ finally provides us with
\begin{equation*}
	\Var\left(\gamma^{-n}\calZ_{n}(\sfA)\right)~\leq~1+\gamma^{-2}\sum_{k=0}^{\infty}\gamma^{-k}\big(\Var(\calZ_1(\sfA))+c_{1} c_{2} k\big)\ <\ \infty
\end{equation*}
for all $n\ge 0$ and thus the $L^2$-boundness of $(\gamma^{-n}\calZ_{n}(\sfA))_{n\geq0}$.


\vspace{.2cm}

Recalling that $(\calZ_n(\sfA))_{n\ge 0}$ and $(\calZ_n)_{n\ge 0}$ satisfy the extinction-explosion principle, the next theorem gives conditions for almost sure extinction, that is, for
$\Prob(\Ext(\sfA))=1$ and $\Prob(\Ext)=1$. 

\begin{Theorem}\label{Satz.Aussterben}
\begin{enumerate}[(a)]
	\item\label{Extinction_p=0} If $p_{\sfAA}=0$, then
		\begin{equation*}
 			\Prob(\Ext(\sfA))=1\quad\text{iff}\quad \mu_{0,\sfA}(\sfAB)\le 1\quad\text{or}\quad\nu<1.
		\end{equation*}
	\item\label{Extinction_p>0} If $p_{\sfAA}>0$, then the following statements are 
	equivalent:
	\begin{enumerate}[(1)]\setlength{\itemsep}{1ex}
 		\item\label{Item.Extinction.1} $\Prob(\Ext(\sfA))=1$
		\item\label{Item.Extinction.EG} $\E\#\G^*_n(\sfA)\leq1$ for 
		all $n\in\N\quad$
		\item\label{Item.Extinction.gamma} $\nu\leq1$, or
			\begin{equation*}
			\nu>1,\quad\E\log g_{\Lambda_{1}}'(1)<0\quad\text{and}\quad\inf_{0\leq\theta
			\leq1}\E g_{\Lambda_{1}}'(1)^{\theta}\leq \frac{1}{\nu}.
			\end{equation*}
		\end{enumerate}
	\item\label{Extinction_all} $\Prob(\Ext)=1\quad\text{iff}\quad \Prob(\Ext
	(\sfA))=1\ \text{and}\ \mu_{0,\sfB}+\mu_{1,\sfB}\leq1$
\end{enumerate}
\end{Theorem}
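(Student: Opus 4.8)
The three parts have rather different flavors, so I would treat them in turn, leaning on the BPRE translation (Proposition \ref{prop:random cell line=BPRE}) and the first-moment identities of Proposition \ref{Prop:dist of Zn(A)}.

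For part \eqref{Extinction_p=0}, when $p_{\sfAA}=0$ there is a single line of $\sfA$-cells $\varnothing\to 0\to 00\to\dots$, so $(\calZ_n(\sfA))_{n\ge 0}$ is an \emph{ordinary} Galton--Watson process: each $\sfA$-parasite is replaced, at the next generation, by the parasites that its single surviving copy (sitting in the unique $\sfA$-daughter $v0$) passes to the next $\sfA$-cell, i.e.\ the offspring distribution is that of $X^{(0)}(\sfA,\sfAB)$ with mean $\mu_{0,\sfA}(\sfAB)$. (If $\nu<1$, i.e.\ $p_{\sfAB}<1$, the $\sfA$-line itself may break, adding a killing probability; but since $p_{\sfAA}=0$ forces $\nu=p_{\sfAB}\le 1$, the case $\nu<1$ is genuinely possible and must be recorded.) The classical GW dichotomy then gives $\Prob(\Ext(\sfA))=1$ iff the mean is $\le 1$ \emph{or} the $\sfA$-line dies, i.e.\ iff $\mu_{0,\sfA}(\sfAB)\le 1$ or $\nu<1$. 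This part is essentially bookkeeping.

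For part \eqref{Extinction_p>0}, the equivalence $\eqref{Item.Extinction.1}\Leftrightarrow\eqref{Item.Extinction.EG}$ I would get from Theorem \ref{Satz.ErholungB}: by \eqref{Eq.BPRE.EG,k>0}, $\E\#\G^*_n(\sfA)=\nu^n\Prob(Z_n(\sfA)>0)$, and $(\nu^{-n}\#\G^*_n(\sfA))$ is a supermartingale, so $\E\#\G^*_n(\sfA)$ is nonincreasing; hence $\E\#\G^*_n(\sfA)\le 1$ for all $n$ is equivalent to $L(\sfA)=0$ a.s.\ (using $\E\#\G^*_0(\sfA)=1$ and, in the $\nu>1$ case, uniform integrability so that $\E L(\sfA)=\lim\Prob(Z_n(\sfA)>0)$; in the $\nu\le 1$ case both sides are automatic). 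By Theorem \ref{Satz.ErholungB}\eqref{Item.ErholungB.Extinction}, $L(\sfA)=0$ a.s.\ is in turn equivalent to $\Prob(\Ext(\sfA))=1$ when $p_{\sfAA}>0$ (the hypothesis $p_{\sfAA}>0$ is exactly what rules out the degenerate single-line case and lets us invoke Theorem \ref{Satz.ExplosionInfZellen}\eqref{Item.ExplosionInfZellenB}). The equivalence with \eqref{Item.Extinction.gamma} is the BPRE content: $\#\G^*_n(\sfA)$ is bounded in mean iff the associated BPRE $(Z_n(\sfA))$ is ``subcritical enough''; by the classical criteria (Smith--Wilkinson, Geiger--Kersting--Vatutin) $\Prob(Z_n(\sfA)>0)\to 0$ and $\nu^n\Prob(Z_n(\sfA)>0)\not\to\infty$ exactly when $\nu\le 1$, or $\nu>1$ together with $\E\log g'_{\Lambda_1}(1)<0$ (a.s.\ extinction of the BPRE, cf.\ \eqref{Eq.BPRE.Aussterben}) and $\inf_{0\le\theta\le 1}\E g'_{\Lambda_1}(1)^\theta\le 1/\nu$, the latter being the standard large-deviation/Fuk--Nagaev-type bound on the rate at which a subcritical BPRE survival probability decays (one uses $\Prob(Z_n(\sfA)>0)\le\E\prod_{k=1}^n g'_{\Lambda_k}(1)^\theta=(\E g'_{\Lambda_1}(1)^\theta)^n$ for the ``only if'' direction, and a matching lower bound via a change of measure / the annealed-to-quenched comparison for the ``if'' direction). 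The main obstacle here is marshalling the BPRE survival-rate asymptotics cleanly; I expect to quote \cite{GeKeVa:03, Smith+Wilkinson:69} rather than reprove them.

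For part \eqref{Extinction_all}, I would argue: $\Ext=\Ext(\sfA)\cap\Ext(\sfB)$ up to null sets, so $\Prob(\Ext)=1$ certainly forces $\Prob(\Ext(\sfA))=1$. Conditionally on a contaminated $\sfB$-cell ever appearing, its progeny is governed by the one-type Bansaye model, whose parasite process goes extinct a.s.\ iff $\mu_{0,\sfB}+\mu_{1,\sfB}\le 1$ (a standard GW-type first-moment argument, since $\calZ_\bullet(\sfB)$ restricted to that subpopulation is supercritical as a total count exactly when the pair of means sums to more than one); combined with \eqref{SA5} guaranteeing that on $\Surv(\sfA)$, or more generally whenever $\sfA$-parasites persist, a contaminated $\sfB$-cell is eventually produced with probability one (the argument already used in the proof of Theorem \ref{Satz.ExplosionInfZellen}\eqref{Item.ExplosionInfZellenAlle}), we get $\Prob(\Ext(\sfB)\mid\text{a contaminated }\sfB\text{-cell ever appears})=1$ iff $\mu_{0,\sfB}+\mu_{1,\sfB}\le 1$. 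Intersecting with $\Prob(\Ext(\sfA))=1$ yields the claim. The one point requiring a little care is the case $p_{\sfAA}=0$, $\mu_{0,\sfB}+\mu_{1,\sfB}>1$ but $\mu_{0,\sfA}(\sfAB)\le 1$: then $\Ext(\sfA)$ is a.s.\ certain yet a contaminated $\sfB$-cell may nonetheless be produced before the $\sfA$-parasites vanish (thanks to $\mu_{1,\sfA}(\sfAB)>0$ from \eqref{SA5}), so $\Prob(\Ext)<1$ is consistent with $\Prob(\Ext(\sfA))=1$ — the ``and'' in the statement is genuinely needed, and I would remark on this. Overall the only substantive difficulty in the whole theorem is part \eqref{Extinction_p>0}(3), i.e.\ identifying the precise BPRE survival-rate condition; parts \eqref{Extinction_p=0} and \eqref{Extinction_all} are reductions to the classical GW and single-type results.
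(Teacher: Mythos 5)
Parts \eqref{Extinction_p=0} and \eqref{Extinction_all} of your plan are fine and essentially the paper's own arguments. The genuine problem is your route to $\eqref{Item.Extinction.1}\Leftrightarrow\eqref{Item.Extinction.EG}$ in part \eqref{Extinction_p>0}. First, the supermartingale property of $(\nu^{-n}\#\G^*_n(\sfA))_{n\ge0}$ makes $\nu^{-n}\E\#\G^*_n(\sfA)$ nonincreasing, not $\E\#\G^*_n(\sfA)$ itself (for $\nu>1$ the latter may well increase). More seriously, the chain ``$\E\#\G^*_n(\sfA)\le1$ for all $n$ $\Leftrightarrow$ $L(\sfA)=0$ a.s. $\Leftrightarrow$ $\Prob(\Ext(\sfA))=1$'' is false in both of the directions you actually need: Theorem \ref{Satz.ErholungB}\eqref{Item.ErholungB.Extinction} is stated under the hypothesis $\Prob(L(\sfA)=0)<1$ and hence only yields $\Prob(\Ext(\sfA))=1\Rightarrow L(\sfA)=0$ a.s., not the converse. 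In the weakly subcritical regime $\nu>1$, $\E\log g_{\Lambda_1}'(1)<0$, $\E g_{\Lambda_1}'(1)\log g_{\Lambda_1}'(1)>0$ and $\inf_{0\le\theta\le1}\E g_{\Lambda_1}'(1)^{\theta}>1/\nu$ one has $L(\sfA)=0$ a.s.\ by Theorem \ref{Satz.ErholungB}\eqref{Item.ErholungB.FSExtinction}, while $\E\#\G^*_n(\sfA)=\nu^n\Prob(Z_n(\sfA)>0)\to\infty$ and, by the very theorem you are proving, $\Prob(\Ext(\sfA))<1$. Indeed, if your equivalences were valid, combining them with Theorem \ref{Satz.ErholungB}\eqref{Item.ErholungB.FSExtinction} would yield ``extinction a.s.\ iff $\nu\le1$ or $\E\log g_{\Lambda_1}'(1)\le0$'', which contradicts condition \eqref{Item.Extinction.gamma}. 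The paper closes this step differently: $\eqref{Item.Extinction.1}\Rightarrow\eqref{Item.Extinction.EG}$ by contraposition, embedding a supercritical Galton--Watson process with offspring law $\mathcal{L}(\#\G^*_m(\sfA))$ once $\E\#\G^*_m(\sfA)>1$, and $\eqref{Item.Extinction.EG}\Rightarrow\eqref{Item.Extinction.1}$ by Fatou's lemma combined with Theorem \ref{Satz.ExplosionInfZellen}\eqref{Item.ExplosionInfZellenB} (this is where $p_{\sfAA}>0$ enters). Some argument of this kind is needed; your martingale shortcut cannot be repaired.

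Concerning $\eqref{Item.Extinction.EG}\Leftrightarrow\eqref{Item.Extinction.gamma}$: the elementary bound $\Prob(Z_n(\sfA)>0)\le\big(\E g_{\Lambda_1}'(1)^{\theta}\big)^n$, $\theta\in[0,1]$, which you mention in passing, does give $\eqref{Item.Extinction.gamma}\Rightarrow\eqref{Item.Extinction.EG}$ directly and without any moment assumption (arguably more simply than the paper's three-case analysis). But your plan for the converse --- ``quote Geiger--Kersting--Vatutin'' --- is insufficient as it stands: the asymptotics \eqref{eq:Geiger survival estimate} are only available under $\E Z_1(\sfA)^2<\infty$, which the theorem does not assume, and you must additionally rule out the boundary case $\E\log g_{\Lambda_1}'(1)=0$. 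The paper removes the moment restriction by the truncation (``$\alpha$-coupling'') construction, comparing with a process whose parasite offspring are bounded, and excludes the critical case via Jensen's inequality plus another such coupling; your proposal contains no substitute for these steps.
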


\begin{Remark}
Let us point out the following useful facts before proceeding to the proof of the theorem. We first note that, if $\E\log g_{\Lambda_{1}}'(1)<0$ and $\E g_{\Lambda_{1}}'(1)\log g_{\Lambda_{1}}'(1)\leq 0$, then the convexity of
$\theta\mapsto\E g_{\Lambda_{1}}'(1)^{\theta}$ implies that
\begin{equation*}
 	\E g_{\Lambda_{1}}'(1)=\inf_{0\leq\theta\leq1}\E g_{\Lambda_{1}}'(1)^{\theta}.
\end{equation*}
If $\E Z_{1}(\sfA)^{2}<\infty$, Geiger et al. \cite[Theorems 1.1--1.3]{GeKeVa:03} showed that
\begin{equation}\label{eq:Geiger survival estimate}
\Prob(Z_{n}(\sfA)>0)\simeq cn^{-\kappa}\left(\inf_{0\leq\theta\leq 1}\E g_{\Lambda_{1}}'(1)^{\theta}\right)^{n}\quad\text{as }n\to\infty
\end{equation}
for some $c\in (0,\infty)$, where
\begin{itemize}
\item[] $\kappa=0$ if $\E g_{\Lambda_{1}}'(1)\log g_{\Lambda_{1}}'(1)<0$,\hfill (strongly subcritical case)
\item[] $\kappa=1/2$ if $\E g_{\Lambda_{1}}'(1)\log g_{\Lambda_{1}}'(1)=0$,\hfill (intermediately subcritical case)
\item[] $\kappa=3/2$ if $\E g_{\Lambda_{1}}'(1)\log g_{\Lambda_{1}}'(1)>0$,\hfill (weakly subcritical case)
\end{itemize}
A combination of \eqref{Eq.BPRE.EG,k>0} and \eqref{eq:Geiger survival estimate} provides us with the asymptotic relation
\begin{equation}\label{eq:Gn and Geiger}
\E\#\G^*_{n}(\sfA)\simeq cn^{-\kappa}\nu^{n}\left(\inf_{0\leq\theta\leq 1}\E g_{\Lambda_{1}}'(1)^{\theta}\right)^{n}\quad\text{as }n\to\infty,
\end{equation}
in particular (with $\E Z_{1}(\sfA)^{2}<\infty$ still being in force)
\begin{equation}\label{eq:Gn and Geiger2}
\inf_{0\leq\theta\leq 1}\E g_{\Lambda_{1}}'(1)^{\theta}\le\frac{1}{\nu}\quad\text{if}\quad\sup_{n\ge 1}\E\#\G^*_{n}(\sfA)<\infty.
\end{equation}
\end{Remark}

\begin{proof}
\eqref{Extinction_p=0} If $p_{\sfAA}=0$ and $\nu=p_{\sfAB}=1$, each generation possesses exactly one $\sfA$-cell and $(\calZ_n(\sfA))_{n\ge 0}$ thus forms a Galton-Watson branching process with offspring mean $\mu_{0,\sfA}(\sfAB)$ and positive offspring variance (by \eqref{SA3}). Hence a.s.\ extinction occurs iff $\mu_{0,\sfA}(\sfAB)\le 1$ as claimed. If $\nu<1$, type $\sfA$ cells die out a.s. and so do type $\sfA$ parasites.


\vspace{.2cm}
``\eqref{Item.Extinction.1}$\Rightarrow$\eqref{Item.Extinction.EG}''
(by contraposition) We fix $m\in\N$ such that $\E\left(\#\G^*_m(\sfA)\right)>1$ and consider a supercritical Galton-Watson branching process $(S_n)_{n\ge 0}$ with $S_0=1$ and offspring distribution
\begin{equation*}
 	\Prob(S_1=k) = \Prob(\#\G^*_m(\sfA)=k),\quad k\in\N_{0}.
\end{equation*}
Obviously,
\begin{equation*}
 	\Prob(S_n>k)\leq \Prob(\#\G^*_{nm}(\sfA)>k)
\end{equation*}
for all $k,n\in\N_0$, hence 
\begin{equation*}
 	\lim_{n\to\infty}\Prob(\#\G^*_{nm}(\sfA)>0)\geq\lim_{n\to\infty}\Prob(S_n>0)>0,
\end{equation*}
i.e.\ $\sfA$-parasites survive with positive probability.

\vspace{.2cm}
``\eqref{Item.Extinction.EG}$\Rightarrow$\eqref{Item.Extinction.1}''
If $\E\#\G^*_n(\sfA)\leq1$ for all $n\in\N$, then Fatou's lemma implies
\begin{equation*}
 	1\geq\liminf_{n\to\infty} \E\#\G^*_n(\sfA)\geq \E\left(\liminf_{n\to\infty}\#\G^*_n
	(\sfA)\right)
\end{equation*}
giving $\Prob(\Ext(\sfA))=1$ by an appeal to Theorem \ref{Satz.ExplosionInfZellen}.

\vspace{.2cm}
``\eqref{Item.Extinction.gamma}$\Rightarrow$\eqref{Item.Extinction.1},\eqref{Item.Extinction.EG}''
If $\nu\leq1$ then $\E\#\G_{n}^{*}(\sfA)\le\E\#\G_n(\sfA)=\nu^{n}\le 1$ for all $n\in\N$. So let us consider the situation when
$$\nu>1,\quad\E\log g_{\Lambda_{1}}'(1)<0\quad\text{and}\quad\inf_{0\leq\theta\leq1}\E g_{\Lambda_{1}}'(1)^{\theta}\leq\frac{1}{\nu} $$
is valid. By \eqref{Eq.BPRE.EG,k>0},
\begin{equation*}
 	\E\#\G^*_n(\sfA) = \nu^n \Prob(Z_n(\sfA)>0)
\end{equation*}
for all $n\in\N$. We must distinguish three cases:

\vspace{.2cm} \textsc{Case A}. $\E g_{\Lambda_{1}}'(1)\log g_{\Lambda_{1}}'(1)\leq 0$.
By what has been pointed out in the above remark, we then infer
\begin{equation*}
 	\frac{\gamma}{\nu}=\E g_{\Lambda_{1}}'(1)=\inf_{0\leq\theta\leq1}\E 
	g_{\Lambda_{1}}'(1)^{\theta}\le\frac{1}{\nu}
\end{equation*}
and thus $\gamma\le 1$, which in turn entails
\begin{equation*}
	\E\#\G^*_n(\sfA)\leq \E\calZ_n(\sfA) = \gamma^n \leq 1
\end{equation*}
for all $n\in\N$ as required.

\vspace{.2cm} 
\textsc{Case B}. $\E g_{\Lambda_{1}}'(1)\log g_{\Lambda_{1}}'(1)>0$ and $\E Z_{1}(\sfA)^{2}<\infty$. Then, by \eqref{eq:Geiger survival estimate},
\begin{equation*}
 	\Prob(Z_n(\sfA)>0)\ \simeq\ c n^{-3/2} \left(\inf_{0\leq\theta\leq 1}
	\E g_{\Lambda_{1}}'(1)^{\theta}\right)^n\quad\text{as } n\to\infty
\end{equation*}
holds true for a suitable constant $c\in (0,\infty)$ and therefore
\begin{equation*}
	0 = \lim_{n\to\infty} \nu^n \Prob(Z_n(\sfA)>0) = \liminf_{n\to\infty} \E\#\G^*_n(\sfA)
	\geq \E\left(\liminf_{n\to\infty}\#\G^*_n(\sfA)\right),
\end{equation*}
implying $\Prob(\Ext(\sfA))=1$ by Theorem \ref{Satz.ExplosionInfZellen}.

\vspace{.2cm} 
\textsc{Case C}. $\E g_{\Lambda_{1}}'(1)\log g_{\Lambda_{1}}'(1)>0$ and $\E Z_{1}(\sfA)^{2}=\infty$. Using contraposition, suppose that $\sup_{n\in\N}\E\#\G^*_n(\sfA)>1$.
Fix any vector $\alpha=(\alpha^{(u)}_{\sfs})_{u\in\{0,1\},\sfs\in\{\sfAA,\sfAB,\sfBB\}}$
of distributions on $\N_{0}$ satisfying
$$ \alpha_{\sfs,x}^{(u)}\ \le\ \Prob\left(X^{(u)}_{1,v}(\sfA,\sfs)=x\right)\quad\text{for }x\geq 1 $$
and $u,\sfs$ as stated, hence
$$ \alpha_{\sfs,0}^{(u)}\ \ge\ \Prob\left(X^{(u)}_{1,v}(\sfA,\sfs)=0\right)\quad\text{and}\quad\sum_{x\ge n}\alpha_{\sfs,x}^{(u)}\ \le\ \Prob\left(X^{(u)}_{1,v}(\sfA,\sfs)\ge n\right) $$
for each $n\ge 0$. Possibly after enlarging the underlying probability space, we can then construct a cell division process $(Z_{\alpha,v}, T_v)_{v\in\T}$ coupled with and of the same kind as $(Z_{v}, T_v)_{v\in\T}$ such that
\begin{align*}
&X^{(u)}_{\alpha, k,v}(\sfA,\sfs)\ \leq\ X^{(u)}_{k,v}(\sfA,\sfs)\quad\text{a.s.}\\
\text{and}\quad
&\Prob\left(X^{(u)}_{\alpha,k,v}(\sfA,\sfs) = x \right)\ =\ \alpha_{\sfs,x}^{(u)}
\end{align*}
for each $u\in\{0,1\}$, $\sfs\in\{\sfAA,\sfAB,\sfBB\}$, $v\in\T$, $k\geq1$ and $x\ge 1$.
To have $(Z_{\alpha,v}, T_v)_{v\in\T}$ completely defined, put also 
$$ (X^{(0)}_{\alpha,k,v}(\sfB),X^{(1)}_{\alpha,k,v}(\sfB)):=(X^{(0)}_{k,v}(\sfB),X^{(1)}_{k,v}(\sfB)) $$
for all $v\in\T$ and $k\ge 1$. Then $Z_{\alpha,v}\leq Z_v$ a.s.\ and thus
\begin{equation}\label{Eq.GestutzterProzess.G}
 	\E g_{\alpha,\Lambda_{1}}'(1)^{\theta}\ \leq\ \E g_{\Lambda_{1}}'(1)^{\theta},\quad \theta
	\in[0,1],
\end{equation}
where $Z_{\alpha,k}(\sfA)$ and $g_{\alpha,\Lambda_{1}}$ have the obvious meaning. Since the choice of $\alpha$ has no affect on the cell splitting process, we have $\nu_{\alpha}=\nu>1$, while \eqref{Eq.GestutzterProzess.G} ensures
\begin{equation}\label{eq:truncation gf}
\E\log g_{\alpha,\Lambda_{1}}'(1)\le\E\log g_{\Lambda_{1}}'(1)<0.
\end{equation}
For $N\in\N$ let $\alpha(N)=(\alpha_{\sfs}^{(u)}(N))_{u\in\{0,1\},\sfs\in\{\sfAA,\sfAB,\sfBB\}}$ be the vector specified by
\begin{equation*}
 \alpha_{\sfs,x}^{(u)}(N)\ :=\  
\begin{cases}
 \Prob\left(X^{(u)}_{k,v}(\sfA,\sfs) = x \right), &\text{if $1\leq x\leq N$}\\
 0, &\text{if $x>N$}.
\end{cases}
\end{equation*}
Then $\E Z_{\alpha(N),1}(\sfA)^2<\infty$ and we can fix $N\in\N$ such that $\sup_{n\in\N}\E\#\G^*_{\alpha(N),n}(\sfA)>1$, because $\#\G^*_{\alpha(N),n}(\sfA)\uparrow\#\G^*_{n}(\sfA)$ as $N\to\infty$. Then, by what has already been proved under Case B in combination with \eqref{Eq.GestutzterProzess.G},\eqref{eq:truncation gf} and $\nu_{\alpha(N)}>1$, we infer
\begin{equation*}
	\inf_{0\leq\theta\leq1}\E g_{\Lambda_{1}}'(1)^{\theta}\geq\inf_{0\leq\theta\leq1}\E g_{\alpha(N),\Lambda_{1}}'(1)^{\theta}>\frac{1}{\nu}.
\end{equation*}
and thus violation of \eqref{Item.Extinction.EG}.

\vspace{.2cm} 
``\eqref{Item.Extinction.EG}$\Rightarrow$\eqref{Item.Extinction.gamma}''
Suppose $\E\#\G_{n}^{*}(\sfA)\le 1$ for all $n\in\N$ and further $\nu>1$ which, by 
\eqref{Eq.BPRE.EG,k>0}, entails $\lim_{n\to\infty}\Prob(Z_{n}(\sfA)>0)=0$ and thus $\E\log g_{\Lambda_{1}}'(1)\le 0$. We must show that $\E\log g_{\Lambda_{1}}'(1)<0$ and $\inf_{0\leq\theta\leq1}\E g_{\Lambda_{1}}'(1)^{\theta}\le\nu^{-1}$. But given $\E\log g_{\Lambda_{1}}'(1)<0$, the second condition follows from \eqref{eq:Gn and Geiger2} if $\E Z_{1}(\sfA)^{2}<\infty$, and by a suitable ``$\alpha$-coupling'' as described under Case C above if $\E Z_{1}(\sfA)^{2}=\infty$. Hence it remains to rule out that $\E\log g_{\Lambda_{1}}'(1)=0$. Assuming the latter, we find with the help of Jensen's inequality that
\begin{equation*}
 	\inf_{0\leq\theta\leq 1}\log \E g_{\Lambda_{1}}'(1)^{\theta}\geq \inf_{0\leq\theta\leq1}\theta \,\E\log g_{\Lambda_{1}}'(1) = 0
\end{equation*}
or, equivalently,
\begin{equation*}
	\inf_{0\leq\theta\leq1}\E g_{\Lambda_{1}}'(1)^{\theta}\geq 1>\frac{1}{\nu}
\end{equation*}
(which implies $\inf_{0\leq\theta\leq1}\E g_{\Lambda_{1}}'(1)^{\theta}=1$).
Use once more a suitable ``$\alpha$-coupling'' and fix $\alpha$ in such a way that
\begin{equation*}
 1=\inf_{0\leq\theta\leq1}\E g_{\Lambda_{1}}'(1)^{\theta} >\inf_{0\leq\theta\leq1}\E g_{\alpha,\Lambda_{1}}'(1)^{\theta}>\frac{1}{\nu}
\end{equation*}
which implies subcriticality of the associated BPRE $(Z_{\alpha,n}(\sfA))_{n\geq0}$.
By another appeal to \eqref{eq:Gn and Geiger2}, we thus arrive at the contradiction
\begin{equation*}
 	\sup_{n\in\N}\E\#\G^*_{n}(\sfA)\ge\sup_{n\in\N}\E\#\G^*_{\alpha,n}(\sfA)=\infty.
\end{equation*}
This completes the proof of \eqref{Extinction_p>0}.

\vspace{.2cm}
\eqref{Extinction_all}
Since $\Ext\subseteq \Ext(\sfA)$, we see that $\Prob(\Ext)=1$ holds iff $\Prob(\Ext(\sfA))=1$
and the population of $\sfB$-parasites dies out a.s.\ as well. But the latter form a Galton-Watson branching process with offspring mean $\mu_{0,\sfB}+\mu_{1,\sfB}$ once all $\sfA$-parasites have disappeared and hence die out as well iff $\mu_{0,\sfB}+\mu_{1,\sfB}\leq1$.
\end{proof}


\begin{Theorem}\label{Satz.W}
Assuming $\Prob(\Surv(\sfA))>0$ and thus particularly $\gamma>1$, the following assertions hold true:
 \begin{enumerate}[(a)]
 	\item\label{Item.W>0} If $\E\calZ_1(\sfA)^2<\infty$ and $\hat{\gamma}\leq\gamma$, then $\Prob(W>0)>0$ and $\E W=1$.
	\item\label{Item.W=0} If $\Prob(W=0)<1$, then $\Ext(\sfA)=\{W=0\}$ a.s.
 \end{enumerate}
\end{Theorem}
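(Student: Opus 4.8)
For part (a) the plan is immediate from the computations carried out just before the theorem. There it is shown that, when $\E\calZ_1(\sfA)^2<\infty$, $\gamma>1$ and $\hat\gamma\le\gamma$, the nonnegative mean-one martingale $(\gamma^{-n}\calZ_n(\sfA))_{n\ge 0}$ is $L^2$-bounded. Being $L^2$-bounded it is uniformly integrable, hence converges to its a.s.\ limit $W$ also in $L^1$, so that $\E W=\lim_{n\to\infty}\E(\gamma^{-n}\calZ_n(\sfA))=1$; in particular $\Prob(W>0)>0$, which is all that (a) asserts.

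For part (b) I would start with the trivial inclusion: on $\Ext(\sfA)$ one has $\calZ_n(\sfA)=0$ eventually and therefore $W=0$, so $\Ext(\sfA)\subseteq\{W=0\}$ a.s.; it then suffices to prove the reverse estimate $\Prob(W=0)\le\Prob(\Ext(\sfA))$. Write $q:=\Prob(W=0)<1$. The structural observation I would isolate first is that, for any cell $v\in\G_n(\sfA)$, the cells and parasites in the subtree rooted at $v$ form an independent copy of the whole model started from $Z_v$ parasites in a type-$\sfA$ root; moreover, since parasites within a cell multiply independently of one another, the sub-population descending from one single of these $Z_v$ parasites is dominated by the full subtree population while itself being distributed as $(\calZ_n(\sfA))_{n\ge 0}$ started from a single parasite. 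Denoting by $W_v$ the martingale limit attached to the subtree of $v$, this domination gives $\Prob(W_v=0\mid Z_v=k)\le q$ for every $k\ge 1$.

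I would then run the stopping-time argument already used for Theorem \ref{Satz.ErholungB}\eqref{Item.ErholungB.Extinction}, distinguishing two cases. If $p_{\sfAA}>0$, set $\tau_n:=\inf\{m\in\N:\#\G^*_m(\sfA)\ge n\}$ and decompose generation $\tau_n$ along the cells of $\G_{\tau_n}(\sfA)$: these spawn conditionally independent subtrees given $\mathcal F_{\tau_n}$, and $W=\gamma^{-\tau_n}\sum_{v\in\G^*_{\tau_n}(\sfA)}W_v$ on $\{\tau_n<\infty\}$, whence
\[
\Prob(W=0\mid\mathcal F_{\tau_n})\ =\ \prod_{v\in\G^*_{\tau_n}(\sfA)}\Prob(W_v=0\mid\mathcal F_{\tau_n})\ \le\ q^{\,\#\G^*_{\tau_n}(\sfA)}\ \le\ q^n\quad\text{on }\{\tau_n<\infty\},
\]
so $\Prob(W=0)\le q^n+\Prob(\tau_n=\infty)$ for every $n$. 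Letting $n\to\infty$, the first term vanishes because $q<1$, while $\{\tau_n=\infty\}\uparrow\{\sup_m\#\G^*_m(\sfA)<\infty\}$, which equals $\Ext(\sfA)$ a.s.\ by Theorem \ref{Satz.ExplosionInfZellen}\eqref{Item.ExplosionInfZellenB} (here $p_{\sfAA}>0$ enters). This yields $\Prob(W=0)\le\Prob(\Ext(\sfA))$ and hence $\{W=0\}=\Ext(\sfA)$ a.s. In the remaining case $p_{\sfAA}=0$, the standing assumption $\Prob(\Surv(\sfA))>0$ forces $\nu=1$ and a single line of $\sfA$-cells, so that $(\calZ_n(\sfA))_{n\ge 0}$ is an ordinary supercritical Galton-Watson process and $\{W=0\}=\Ext(\sfA)$ a.s.\ (given $\Prob(W=0)<1$) is the classical dichotomy for such processes (cf.\ \cite{Athreya+Ney:72}); alternatively, the above argument applies verbatim with $\tau_n$ replaced by $\sigma_n:=\inf\{m:\calZ_m(\sfA)\ge n\}$, the tagged sub-populations being genuinely iid in this case because the environment is trivial.

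The step I expect to require the most care is the subtree decomposition in the case $p_{\sfAA}>0$: one must make precise that the subtrees hanging from the $\sfA$-cells of generation $\tau_n$ --- \emph{including their cell-type structure} --- are conditionally independent replicas of the original process and that $W$ is $\gamma^{-\tau_n}$ times the sum of their limits $W_v$, and one must combine this with the simple but indispensable independence of parasites within a cell, which is exactly what allows a single tagged parasite to spawn a dominated copy of the one-parasite process. Everything else --- the choice of stopping time, the product bound, the passage to the limit via Theorem \ref{Satz.ExplosionInfZellen} --- is routine and follows the template of Theorem \ref{Satz.ErholungB}\eqref{Item.ErholungB.Extinction}.
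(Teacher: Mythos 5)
Your proof is correct and follows essentially the paper's route: part (a) is exactly the $L^2$-boundedness/uniform-integrability argument carried out just before the theorem, and part (b) reproduces the stopping-time, subtree-product argument of Theorem \ref{Satz.ErholungB}\eqref{Item.ErholungB.Extinction}, which is precisely what the paper invokes. The additional care you take --- the tagged-parasite domination yielding $\Prob(W_v=0\mid Z_v=k)\le\Prob(W=0)$, and the separate treatment of $p_{\sfAA}=0$ (where Theorem \ref{Satz.ExplosionInfZellen}\eqref{Item.ExplosionInfZellenB} is unavailable and one falls back on the classical Galton--Watson dichotomy) --- only fills in details the paper leaves implicit.
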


\begin{proof}
\eqref{Item.W>0}
As pointed out at the beginning of this subsection, $(\calZ_n(\sfA)/\gamma^n)_n$ is a $L^2$-bounded martingale and thus uniformly integrable. It therefore converges in $L^1$ to its limit $W$ satisfying $\E W=1$ as well as $\Prob(W>0)>0$.

\vspace{.1cm}
\eqref{Item.W=0} follows in the same manner as Theorem \ref{Satz.ErholungB}\eqref{Item.ErholungB.Extinction}.
\end{proof}

\section{Relative proportions of contaminated cells}\label{Section.Results}

We now turn to a statement of our main results that are concerned with the long-run behavior of relative proportions of contaminated cells containing a given number of parasites, viz.
\begin{equation*}
 	F_k(n):=\frac{\#\{v\in\G^*_n|Z_v=k\}}{\#\G^*_n}
\end{equation*}
for $k\in\N$ and $n\to\infty$, and of the corresponding quantities when restricting to contaminated cells of a given type $\sft$, viz.
\begin{equation*}
 	F_k(n,\sft):=\frac{\#\{v\in\G^*_n(\sft)|Z_v=k\}}{\#\G^*_n(\sft)}
	\end{equation*}
for $\sft\in\{\sfA,\sfB\}$. Note that
\begin{equation*}
 	F_k(n) = F_k(n,\sfA)\,\frac{\#\G^*_n(\sfA)}{\#\G^*_n}+F_k(n,\sfB)\,\frac{\#\G^*_n		(\sfB)}{\#\G^*_n}.
\end{equation*}
Given survival of type-$\sfA$ parasites, i.e.\ conditioned upon the event $\Surv(\sfA)$, our results, devoted to regimes where at least one of $\sfA$- or $\sfB$-parasites multiply at a high rate, describe the limit behavior of $F_k(n,\sfA)$, $\#\G^*_n(\sfA)/\#\G^*_n$ and $F_k(n,\sfB)$, which depends on that of $\calZ_n(\sfA)$ and the BPRE $Z_n(\sfA)$ in a crucial way.

\vspace{.2cm}
For convenience, we define
\begin{equation*}
  \Prob_{z,\sft} := \Prob(\cdot|Z_{\varnothing}=z, T_{\varnothing}=\sft),\quad z\in\N,\ \sft\in\{\sfA,\sfB\},
\end{equation*}
and use $\E_{z,\sft}$ for expectation under $\Prob_{z,\sft}$. Recalling that $\Prob$ stands for $\Prob_{1,\sfA}$, 
we put $\Prob^{*}:=\Prob(\cdot|\Surv(\sfA))$ and, furthermore,
\begin{equation*}
 	\Prob^*_{z,\sft}:=\Prob_{z,\sft}(\cdot|\Surv(\sfA))\quad \text{and}\quad \Prob^n_{z,\sft}=\Prob_{z,\sft}(\cdot|\calZ_n(\sfA)>0)
\end{equation*}
for $z\in\N$ and $\sft\in\{\sfA,\sfB\}$. Convergence in probability with respect to $\Prob^{*}$ is shortly expressed as $\Pstlim$.


\vspace{.2cm}
Theorem \ref{Satz.Proportion.A1} deals with the situation when $\sfB$-parasites multiply at a high rate, viz.\ 
$$ \mu_{0,\sfB}\mu_{1,\sfB}>1, $$
In essence, it asserts that among all contaminated cells in generation $n$ those of type $\sfB$ prevail as $n\to\infty$. This may be surprising at first glance because multiplication of $\sfA$-parasites may also be high (or even higher), namely if
\begin{equation}\tag{SupC}\label{Eq.Supercritical}
 	\mu_{0,\sfA}(\sfAA)^{p_{\sfAA}}\mu_{1,\sfA}(\sfAA)^{p_{\sfAA}}\mu_{0,\sfA}(\sfAB)^{p_{\sfAB}}>1,
\end{equation}
i.e., if the BPRE $(Z_n(\sfA))_{n\ge 0}$ is supercritical. On the other hand, it should be recalled that the subpopulation of $\sfA$-cells grows at rate $\nu<2$ only, whereas the growth rate of $\sfB$-cells is 2. Hence, prevalence of $\sfB$-cells in the subpopulation of all contaminated cells is observed whenever $\#\G^*_n(\sfB)/\#\G_{n}(\sfB)$, the relative proportion of contaminated cells within the $n^{th}$ generation of all $\sfB$-cells, is asymptotically positive as $n\to\infty$.


\begin{Theorem}\label{Satz.Proportion.A1}
 	Assuming $\mu_{0,\sfB}\mu_{1,\sfB}>1$, the following assertions hold true:
	\begin{enumerate}[(a)]
		\item\label{Item.Satz.A1.G}
			\begin{equation*}
 				\frac{\#\G^*_n(\sfA)}{\#\G^*_n}\rightarrow0\quad  \Prob^*\text{-a.s.}
			\end{equation*}
 		\item\label{Item.Satz.A1.FA} Conditioned upon survival of $\sfA$-cells, $F_k(n, 
			\sfB)$ converges to $0$ in probability for any $k\in\N$, i.e.
			\begin{equation*}
	 			\Pstlim_{n\to\infty} F_k(n,\sfB) = 0.
			\end{equation*}
	\end{enumerate}
\end{Theorem}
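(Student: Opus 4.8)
The plan is to exploit the fact that the $\sfA$-cell population grows at geometric rate $\nu<2$ whereas the $\sfB$-cell population grows at rate $2$, and that under $\mu_{0,\sfB}\mu_{1,\sfB}>1$ the contaminated $\sfB$-cells make up an asymptotically positive fraction of all $\sfB$-cells. First I would establish part~\eqref{Item.Satz.A1.G}. We know from Theorem~\ref{Satz.ErholungB} that $\nu^{-n}\#\G^*_n(\sfA)\to L(\sfA)$ a.s., hence $\#\G^*_n(\sfA)\le\#\G_n(\sfA)$ grows at most like $\nu^n$ (up to the a.s.\ finite random factor). On the other hand, the condition $\mu_{0,\sfB}\mu_{1,\sfB}>1$ means, by Theorem~\ref{Satz.Erholung}\eqref{Item.Erholung.AlleFSExtinction} and its proof via the one-type model of \cite{Bansaye:08}, that $\Prob(L>0)>0$ where $\#\G^*_n/2^n\to L$ a.s.; moreover by Theorem~\ref{Satz.Erholung}\eqref{Item.Erholung.AlleExtinction} we have $\{L=0\}=\Ext$ a.s. Since $\Surv(\sfA)\subseteq\Surv$, we get $L>0$ $\Prob^*$-a.s. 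Consequently
\begin{equation*}
\frac{\#\G^*_n(\sfA)}{\#\G^*_n}=\frac{\nu^{-n}\#\G^*_n(\sfA)}{2^{-n}\#\G^*_n}\left(\frac{\nu}{2}\right)^n\longrightarrow L(\sfA)\cdot L^{-1}\cdot 0=0\quad\Prob^*\text{-a.s.},
\end{equation*}
using $\nu<2$ by \eqref{SA2} and the fact that the limiting ratio $L(\sfA)/L$ is a.s.\ finite on $\Surv(\sfA)$ (where $L>0$). This gives \eqref{Item.Satz.A1.G}.

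For part~\eqref{Item.Satz.A1.FA}, the idea is that the contaminated $\sfB$-cells in generation $n$ almost all descend from contaminated $\sfB$-cells born at a fixed finite generation $m$, each of which spawns an independent copy of the single-type Bansaye model, and in each such copy the proportion of cells hosting exactly $k$ parasites vanishes. More precisely, I would fix $\eps>0$ and $m$ large, decompose $\G^*_n(\sfB)$ according to the ancestor in generation $m$, and separate the $\sfB$-subpopulations founded by contaminated $\sfB$-cells present at generation $m$ from the (asymptotically negligible, after dividing by $2^n$) contribution of cells that become contaminated $\sfB$-cells only after generation $m$ — negligibility here follows because $\#\G^*_n(\sfA)/2^n\to0$ a.s.\ (all newly contaminated $\sfB$-cells after generation $m$ must receive parasites through a boundary between an $\sfA$-cell and a $\sfB$-daughter, and such cells number at most $\#\G^*_k(\sfA)$ summed over $k\le n$, which is $o(2^n)$). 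On each of the finitely many $\sfB$-subpopulations founded at generation $m$ by a contaminated cell with, say, $z$ parasites, Theorem~4.2 (or the analogue of the present Theorem for the one-type model) in \cite{Bansaye:08} — equivalently, the statement that in the single-type model $\#\{v\in\G^*_n(\sfB):Z_v=k\}/\#\G^*_n(\sfB)\to0$ — yields that the number of such cells hosting exactly $k$ parasites is $o(2^n)$. Summing the $O(1)$ many contributions and dividing by $\#\G^*_n(\sfB)$, which by \eqref{Item.Erholung.AlleExtinction} and the reasoning above is $\asymp 2^n L$ with $L>0$ $\Prob^*$-a.s., gives $F_k(n,\sfB)\to0$ in $\Prob^*$-probability.

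The main obstacle is the second part: one must carefully justify that the single-type result of \cite{Bansaye:08} applies simultaneously to all the finitely many $\sfB$-subpopulations founded at generation $m$ and that the error terms are uniform enough to survive division by $\#\G^*_n(\sfB)$; in particular one needs the convergence $\#\{v\in\G^*_n(\sfB):Z_v=k\}/\#\G^*_n(\sfB)\to 0$ to hold in the appropriate conditional sense for each founding cell (which requires knowing this limit theorem for the Bansaye model started from an arbitrary number $z\ge 1$ of parasites, and that the corresponding survival events are compatible). Handling the cells that become contaminated $\sfB$-cells only after generation $m$ also needs the bound $\sum_{k\le n}\#\G^*_k(\sfA)=o(2^n)$ a.s., which follows from Theorem~\ref{Satz.ErholungB} since $\nu<2$; making this precise, rather than the somewhat delicate bookkeeping it entails, is where most of the work lies, but it is routine given the tools already assembled.
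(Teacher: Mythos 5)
Your part \eqref{Item.Satz.A1.G} is correct and is essentially the paper's own argument (write $\#\G^*_n(\sfA)/\#\G^*_n=(\nu/2)^n\cdot(2^n/\#\G^*_n)\cdot(\nu^{-n}\#\G^*_n(\sfA))$, use Theorems \ref{Satz.ErholungB} and \ref{Satz.Erholung} and $L>0$ $\Prob^*$-a.s.). The problem is in part \eqref{Item.Satz.A1.FA}, where your treatment of the cells whose generation-$m$ ancestor is of type $\sfA$ contains a genuine error. You bound the contribution of ``cells that become contaminated $\sfB$-cells only after generation $m$'' by the cumulative number of $\sfA$-to-$\sfB$ boundary cells, $\sum_{j\le n}\#\G^*_j(\sfA)=O(\nu^n)=o(2^n)$. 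But what enters the numerator of $F_k(n,\sfB)$ is not the boundary cells themselves: it is all their generation-$n$ descendants. A contaminated $\sfB$-cell created from an $\sfA$-mother at generation $j\in(m,n]$ has up to $2^{n-j}$ descendants at generation $n$, and under $\mu_{0,\sfB}\mu_{1,\sfB}>1$ a positive fraction of these is typically contaminated. So the number of $v\in\G^*_n(\sfB)$ with $T_{v|m}=\sfA$ is of order $2^n$ times a random factor whose mean is about $(\nu/2)^m$ (indeed $\E\,2^{-n}\#\{v\in\G_n:T_{v|m}=\sfA\}=(\nu/2)^m$); it is small for large $m$ but it is \emph{not} $o(2^n)$ as $n\to\infty$ with $m$ fixed, and on $\Surv(\sfA)$ its limiting fraction of $2^n$ is strictly positive with positive probability. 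Consequently the ``routine bookkeeping'' you defer is exactly where the argument breaks: you must run a double limit (first $n\to\infty$, then $m\to\infty$, with an $\eps$-quantification of the $(\nu/2)^m$ term), not a single $n\to\infty$ limit for fixed $m$.

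This is precisely what the paper's proof does, and it does so more economically by working in expectation rather than pathwise: it bounds $\delta\,\E(L\1_{D_n})$, where $D_n=\{\sum_{k\le K}F_k(n,\sfB)>\delta\}\cap\Surv(\sfA)$, splits generation-$n$ cells according to whether $T_{v|m}=\sfB$ or $T_{v|m}=\sfA$ (the latter contributing exactly the term $(\nu/2)^m\le\eps$ that your argument mishandles), rewrites the former via the random cell line as $\sum_{z\le z_0}\Prob_{z,\sfB}(0<Z_{[n-m]}\le K)+2\eps$, and then only needs the extinction--explosion dichotomy of the single-type BPRE $(Z_{[n]})_{n\ge0}$ under $\Prob_{z,\sfB}$ --- not Bansaye's full proportion theorem --- to send this sum to $0$; finally $\E(L\1_{D_n})\to0$ forces $\Prob(D_n)\to0$ because $L>0$ on $\Surv$. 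Your alternative route for the generation-$m$ contaminated $\sfB$-founders (finitely many independent single-type subpopulations, each handled by Bansaye's Theorem 5.1 for arbitrary initial parasite number) is workable, but it is heavier than necessary and does not repair the gap above; to complete your proof you would still have to replace the false $o(2^n)$ claim by an estimate of the $\sfA$-descendant contribution that is uniform in $n$ and vanishes only as $m\to\infty$, and then interchange the limits in the manner just described.
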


\vspace{.2cm}
Properties attributed to a high multiplication rate of $\sfA$-parasites are given in Theorem \ref{Satz.Proportion.B1}. First of all, contaminated $\sfB$-cells still prevail in the long-run because, roughly speaking, highly infected $\sfA$-cells eventually produce highly infected $\sfB$-cells whose offspring $m$ generations later for any fixed $m$ are all contaminated (thus $2^{m}$ in number). Furthermore, the $F_{k}(n,\sfA)$ behave as described in \cite{Bansaye:08} for the single-type case: as $n\to\infty$, the number of parasites in any contaminated $\sfA$-cell in generation $n$ tends to infinity and $F_k(n,\sfA)$ to $0$ in probability. Finally, if we additionally assume that type-$\sfB$ parasites multiply faster than type-$\sfA$ parasites, i.e.
$$\mu_{\sfB}:=\mu_{0,\sfB}+\mu_{1,\sfB}>\gamma,$$
then type-$\sfB$ parasites become predominant and $F_k(n,\sfB)$ behaves again in  Bansaye's single-type model \cite{Bansaye:08}.

\begin{Theorem}\label{Satz.Proportion.B1}
	Assuming \eqref{Eq.Supercritical}, the following assertions hold true:
	\begin{enumerate}[(a)]
		\item\label{Item.Satz.B1.FB} Conditioned upon survival of $\sfA$-cells, $F_k(n, 
			\sfA)$ converges to $0$ in probability for any $k\in\N$, i.e.
			\begin{equation*}
	 			\Pstlim_{n\to\infty}F_k(n,\sfA)=0.
			\end{equation*}
		\item\label{Item.Satz.B1.G} \begin{equation*}
			 	\Pstlim_{n\rightarrow\infty}\frac{\#\G^*_n(\sfA)}{\#\G^*_n}=0.
			\end{equation*}
		\item If $\E_{1,\sfB}\calZ_1^2<\infty$, $\mu_{\sfB}>\gamma$ and $					\mu_{0,\sfB}\log\mu_{0,\sfB}+\mu_{1,\sfB}\log\mu_{1,\sfB}<0$, 
		then
			\begin{equation*}
 				\Pstlim_{n\rightarrow\infty}F_k(n,\sfB)=\Prob(\calY(\sfB)=k)
			\end{equation*}
			for all $k\in\N$, where $\Prob(\calY(\sfB)=k)=\lim_{n\rightarrow\infty}
			\Prob_{1,\sfB}(Z_{[n]}=k|Z_{[n]}>0)$.
	\end{enumerate}
\end{Theorem}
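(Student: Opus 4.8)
The plan is to treat the three assertions in turn, reducing parts (a) and (b) to the asymptotics of the BPRE $(Z_n(\sfA))_{n\ge 0}$ established in Section \ref{Section.Preliminaries}, and reducing part (c) to Bansaye's single-type results \cite{Bansaye:08} applied to a $\sfB$-subtree. For part (a), I would first note that \eqref{Eq.Supercritical} says precisely that $\E\log g_{\Lambda_1}'(1)>0$, so the random $\sfA$-cell line $(Z_{[n]})_{n\ge 0}$, conditioned on $T_{[n]}=\sfA$, is a supercritical BPRE and hence, on its survival set, $Z_{[n]}\to\infty$. The quantity $F_k(n,\sfA)$ is, by Proposition \ref{Prop:dist of Zn(A)}, controlled in mean: $\E\left(\#\{v\in\G_n(\sfA):Z_v=k\}\right)=\nu^n\Prob(Z_n(\sfA)=k)$, while $\#\G^*_n(\sfA)$ has mean $\nu^n\Prob(Z_n(\sfA)>0)$. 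Since $\Prob(Z_n(\sfA)=k)/\Prob(Z_n(\sfA)>0)\to 0$ for supercritical BPRE (the conditional law of $Z_n(\sfA)$ given survival escapes to infinity), a first-moment/Markov argument combined with the lower bound on $\#\G^*_n(\sfA)$ coming from Theorem \ref{Satz.ExplosionInfZellen}\eqref{Item.ExplosionInfZellenB} (which gives $\#\G^*_n(\sfA)\to\infty$ $\Prob^*$-a.s.) should yield $\Pstlim_n F_k(n,\sfA)=0$. The technical care needed is in passing from the annealed BPRE statement to a statement about the empirical proportion along the tree; I would mimic the many-to-one / size-biasing computation used for the analogous Theorem 4.2 in \cite{Bansaye:08}, using that a uniformly picked contaminated $\sfA$-cell in generation $n$ has, after the conditioning of Proposition \ref{prop:random cell line=BPRE}, parasite count distributed as $Z_n(\sfA)$ given $Z_n(\sfA)>0$.

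For part (b), the idea is that a highly infected $\sfA$-cell, once it throws off a $\sfB$-daughter (which happens with probability $p_{\sfAB}>0$ at each step along the $\sfA$-line, since $p_{\sfAA}<1$ by \eqref{SA2}), seeds a $\sfB$-subtree all of whose $2^m$ cells $m$ generations later are contaminated whenever the founding $\sfB$-cell carries enough parasites; and by part (a) (really by $Z_{[n]}\to\infty$) the $\sfA$-cells feeding these $\sfB$-founders become arbitrarily highly infected. Thus for any fixed $m$, the number of contaminated $\sfB$-cells in generation $n$ dominates $2^m$ times the number of $\sfA$-cells in generation $n-m-1$ that are ``highly infected and about to branch to $\sfB$'', while $\#\G^*_n(\sfA)\le\#\G_n(\sfA)\approx L(\sfA)\nu^n$. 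Since $\nu<2$, taking $m$ large makes $\#\G^*_n(\sfA)/\#\G^*_n(\sfB)$, and hence $\#\G^*_n(\sfA)/\#\G^*_n$, arbitrarily small in $\Prob^*$-probability. Concretely I would: fix $\eps>0$; choose $m$ with $(\nu/2)^m$ small; use $Z_{[n]}\to\infty$ on $\Surv(\sfA)$ to find that, with high probability, a positive fraction of $\sfA$-cells in generation $n-m-1$ have at least $M$ parasites; choose $M$ large enough that a $\sfB$-cell with $M$ parasites has all $2^m$ descendants in generation $m$ contaminated with probability $\ge 1-\eps$ (possible since $\mu_{0,\sfB},\mu_{1,\sfB}>0$ and \eqref{SA4} rules out degeneracy — this is again a single-type fact from \cite{Bansaye:08}); then a conditional Borel–Cantelli / law-of-large-numbers over the independent $\sfB$-subtrees gives the bound.

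For part (c), under the extra hypotheses the $\sfB$-parasite population, once founded, is a copy of Bansaye's model which is \emph{strongly subcritical} along a random $\sfB$-cell line (the condition $\mu_{0,\sfB}\log\mu_{0,\sfB}+\mu_{1,\sfB}\log\mu_{1,\sfB}<0$ is exactly strong subcriticality of that BPRE), so by \cite[Theorem 4.3 or its analogue]{Bansaye:08} the empirical distribution $F_k(n,\sfB)$ of parasite counts over contaminated $\sfB$-cells converges in probability to the quasi-stationary law $\Prob(\calY(\sfB)=k)=\lim_n\Prob_{1,\sfB}(Z_{[n]}=k\mid Z_{[n]}>0)$ \emph{within} the $\sfB$-subtree started from any single $\sfB$-founder. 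The work is then to glue finitely many founding $\sfB$-subtrees together: condition on $\Surv(\sfA)$, observe that up to generation $n$ the $\sfB$-cells in generation $n$ split into the subtrees rooted at the (random, but asymptotically $\nu^n$-many, controlled by Theorem \ref{Satz.Erholung} and $\nu<2$) $\sfB$-founders created along $\sfA$-lines, note that $\mu_\sfB>\gamma$ forces the bulk of the $\sfB$-mass in generation $n$ to come from founders created in generations $o(n)$ or with a uniform spread, and apply the single-type result to each with a bounded-number-of-types averaging, using that $\#\G^*_n(\sfB)/\#\G^*_n\to 1$ from part (b). Uniform integrability / a second-moment bound, supplied by $\E_{1,\sfB}\calZ_1^2<\infty$, is what legitimizes exchanging the limit with the sum over founders.

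The main obstacle I expect is part (c): controlling the superposition of the many $\sfB$-subtrees with random, correlated founding times and founding parasite counts, and showing that this superposition still has the Bansaye quasi-stationary profile rather than some mixture distorted by the founding-time distribution. The condition $\mu_\sfB>\gamma$ is presumably exactly what guarantees that late-founded (hence large-parasite-count, by the supercritical $\sfA$-line growth) $\sfB$-subtrees dominate in a way that washes out the dependence on the founder's state — making the limit the \emph{intrinsic} quasi-stationary law and not a founder-dependent average — and turning this heuristic into a rigorous second-moment estimate is the crux. Parts (a) and (b) are comparatively routine given the BPRE theory already assembled and the corresponding arguments in \cite{Bansaye:08}.
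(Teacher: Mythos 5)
Your plans for (a) and (b) match the paper's route: (a) is obtained there by transferring Bansaye's Theorem 5.1 to the Galton--Watson subtree of $\sfA$-cells, and (b) by exactly your mechanism --- highly infected $\sfA$-cells seed infected $\sfB$-daughters at an asymptotically positive rate (Lemma \ref{Lemma.Proportion.B1}: $\#\G^*_n(\sfA,\sfB)/\#\G^*_n(\sfA)\to\beta=\lim_{z\to\infty}\E_{z,\sfA}\#\G^*_1(\sfB)>0$, which rests on part (a)), each such founder fills its depth-$m$ binary subtree with contaminated cells with high probability, and a law of large numbers over these independent subtrees beats the $\sfA$-side growth since $\nu^m\ll 2^m$. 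Two slips to repair: the positive seeding rate is \emph{not} ``$p_{\sfAB}>0$'' --- \eqref{SA2} only gives $p_{\sfAB}+p_{\sfBB}>0$ and $p_{\sfAB}=0$ is allowed; what is actually used is $\E\#\G^*_1(\sfB)>0$ from \eqref{SA5}. Moreover your normalization ``$\#\G^*_n(\sfA)\le\#\G_n(\sfA)\approx L(\sfA)\nu^n$'' conflates the two martingale limits and tacitly needs $L(\sfA)>0$ $\Prob^*$-a.s.\ (true under \eqref{Eq.Supercritical} when $\nu>1$ by Theorem \ref{Satz.ErholungB}, with the degenerate case $p_{\sfAB}=1$, where $\#\G^*_n(\sfA)\le1$, handled separately); the paper instead compares the founder count directly with $\#\G^*(\sfA)$ through its Lemma.

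For (c) there is a genuine gap, and it sits exactly where you locate the crux. The paper does not apply Bansaye's Theorem 5.2 founder-by-founder and glue: used as a black box, that theorem gives no uniformity in the (random, growing in number, correlated) founding times and founder parasite counts, and the reduction to ``finitely many dominant founders'' already presupposes an in-probability lower bound of order $\mu_{\sfB}^n$ for $\#\G^*_n(\sfB)$ on $\Surv$ --- which is itself part of what must be proved, so the proposed gluing begs the question. What the paper does instead is verify, for the full two-type process, the ingredients entering Bansaye's \emph{proof}: $(\mu_{\sfB}^{-n}\calZ_n(\sfB))_{n\ge0}$ is an $L^1$-bounded submartingale with a.s.\ limit $W(\sfB)$ satisfying $\{W(\sfB)=0\}=\Ext$ a.s., and the random-cell-line quantities $\xi_n=(\mu_{\sfB}/2)^{-n}Z_{[n]}$ satisfy $\sup_n\E\xi_n<\infty$ and $\lim_{K\to\infty}\sup_n\E\,\xi_n\1_{\{Z_{[n]}>K\}}=0$; all of these hinge on $\gamma<\mu_{\sfB}$ via geometric series in $\gamma/\mu_{\sfB}$, and the uniform integrability uses strong subcriticality of the $\sfB$-line through \cite[Corollary 2.3]{Afanasyev+etal:05} --- the hypothesis $\E_{1,\sfB}\calZ_1^2<\infty$ enters only through these BPRE results, not through a superposition second-moment estimate of the kind you invoke. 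With Steps 1--4 in hand, the paper reruns Bansaye's argument: analogues of his control of filled-in cells and separation of descendants, reduction of $F_k(n+q,\sfB)$ to the ratio $J_n$, removal of cells with $\sfA$-ancestors using part (b), and identification of the limit by the LLN. Your heuristic that $\mu_{\sfB}>\gamma$ makes early founders dominate is correct (the alternative ``uniform spread'' you hedge with is not), but without estimates of the above type the ``glue finitely many $\sfB$-subtrees'' step does not go through as stated.
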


\section{Proofs}\label{Section.Proofs}

\begin{Beweis}[Theorem \ref{Satz.Proportion.A1}]
\eqref{Item.Satz.A1.G}
By Theorem \ref{Satz.Erholung}, $2^{-n}\#\G^*_n\to L$ $\Prob^{*}$-a.s. and $\Prob^{*}(L>0)=1$, while Theorem \ref{Satz.ErholungB} shows that $\nu^{-n}\#\G^*_n(\sfA)\to L(\sfA)$ $\Prob$-a.s.\ for an a.s. finite random variable $L(\sfA)$. Consequently,
\begin{equation*}
	\frac{\#\G^*_n(\sfA)}{\#\G^*_n} ~=~ \left(\frac{\nu}{2}\right)^{n}\left(\frac{2^n}
	{\#\G^*_n}\right)\left(\frac
	{\#\G^*_n(\sfA)}{\nu^n}\right) ~\simeq~ \frac{1}{L}\left(\frac{\nu}{2}\right)^{n}
	\frac{\#\G^*_n(\sfA)}{\nu^n}\to 0\quad \Prob^{*}\text{-a.s.} 
\end{equation*}
as $n\to\infty$, for $\nu<2$.

\vspace{.2cm}
\eqref{Item.Satz.A1.FA}
Fix arbitrary $\eps, \delta>0$ and $K\in\N$ and define
	\begin{equation*}
 		D_n := \left \{ \sum_{k=1}^{K}F_k(n,\sfB)>\delta \right \}\cap \Surv(\sfA).
	\end{equation*}
By another appeal to Theorem \ref{Satz.Erholung}, $\#\G^*_n(\sfB)\geq 2^nL$ $\Prob^*$-a.s.\ for all $n\in\N$ and $L$ as above. It follows that
	\begin{align*}
	 	\#\{v\in\G_n(\sfB) : 0< Z_v\leq K\} ~\geq~ \delta\,\#\G^*_n(\sfB)\1_{D_n}
		~\geq~ \delta\,2^n\,L\1_{D_n},
	\end{align*}
and by taking the expectation, we obtain for $m\leq n$
	\begin{align*}
	 	\delta\, &\E\left( L\1_{D_n}\right)
		~\leq~ \frac{1}{2^{n}}\,\E\left(\sum_{v\in\G_n}\1_{\{0<Z_v\leq K, T_v=\sfB
		\}}\right)\\
		&~\leq~ \frac{1}{2^{n}}\,\E\left(\sum_{v\in\G_n}\1_{\{0<Z_v\leq K, T_{v|m}
		=\sfB\}}+\,\#\big\{v\in\G_n:T_{v|m}=\sfA, T_v=\sfB\big\}\right)\\
		&~\leq~ \frac{1}{2^{n}}\,\sum_{v\in\G_n}\Prob\left(0<Z_v\leq K, T_{v|m}=
		\sfB\right)+\frac{1}{2^{m}}\,\E\#\G_{m}(\sfA)\\
		&~\leq~ \frac{1}{2^{n}}\,\sum_{z\ge 1}\sum_{v\in\G_n}\Prob\left(0<Z_v\leq 
		K, Z_{v|m}=z,  T_{v|m}=\sfB\right)+\left(\frac{\nu}{2}\right)^m\\
		&~\leq~ \sum_{z=1}^{\infty}\left(\sum_{u\in\G_m}\frac{\Prob(Z_u=z,
		T_u=\sfB)}{2^{m}}\right)\Bigg(\sum_{u\in\G_{n-m}}\frac{\Prob_{z,\sfB}
		(0<Z_v\leq K)}{2^{n-m}}\Bigg)+\left(\frac{\nu}{2}\right)^m\\
		&~\leq~ \sum_{z=1}^{\infty}\Prob(Z_{[m]}=z, T_{[m]}=\sfB)\,\Prob_{z,\sfB}
		\left(0<Z_{[n-m]}\leq K\right)+\left(\frac{\nu}{2}\right)^m.
	\end{align*}
Since $\nu<2$ we can fix $m\in\N$ such that $(\nu/2)^{m}\le\eps$.
Also fix $z_0\in\N$ such that
	\begin{equation*}
	 \Prob(Z_{[m]}>z_0)\leq\eps.
	\end{equation*}
Then
	\begin{align*}
	  \delta\,\E\left( L\1_{D_n}\right) ~&\leq~ \sum_{z\ge 1}\Prob(Z_{[m]}=z, T_
	  {[m]}=\sfB)\,\Prob_{z,\sfB}\left(0<Z_{[n-m]}\leq K\right)+\left(\frac{\nu}
	  {2}\right)^m\\
	  &\leq~\sum_{z=1}^{z_0}\Prob_{z,\sfB}\left(0<Z_{[n-m]}\leq K\right)
	  +2\eps.
	\end{align*}
But the last sum converges to zero as $n\to\infty$ because, under $\Prob_{z,\sfB}$,
$(Z_{[n]})_{n\ge 0}$ is a single-type BPRE (see \cite{Bansaye:08}) and thus satisfies the extinction-explosion principle. So we have shown that $\E L\1_{D_n}\to 0$
implying $\Prob(D_{n})\to 0$ because $L>0$ on $\Surv$. This completes the proof of the theorem.
\end{Beweis}

Turning to the proof of Theorem \ref{Satz.Proportion.B1}, we first note that part (a) can be directly inferred from Theorem 5.1 in \cite{Bansaye:08} after some minor modifications owing to the fact that $\sfA$-cells do not form a binary tree here but rather a Galton-Watson subtree of it. Thus left with the proof of parts (b) and (c), we first give an auxiliary lemma after the following notation: 
For $v\in\G_n$ and $k\in\N$, let
\begin{equation*}
 \G^*_k(\sft,v):=\{u\in\mathbb G^*_{n+k}(\sft):v<u\}
\end{equation*}
denote the set of all infected $\sft$-cells in generation $n+k$ stemming from $v$.
Let further be
\begin{equation*}
 \mathbb G^*_n(\sfA,\sfB) := \{u\in\G^*_{n+1}(\sfB):T_{u|n}=\sfA\}
\end{equation*}
which is the set of all infected $\sfB$-cells in generation $n+1$ with mother cells of type $\sfA$. 

\begin{Lemma}\label{Lemma.Proportion.B1}
  If \eqref{Eq.Supercritical} holds true, then
	\begin{equation*}
	 	\Pstlim_{n\to\infty}\frac{\#\mathbb G^*_n(\sfA,\sfB)}{\#\mathbb G^*_n(\sfA)} ~=~ \beta ~>~0,
	\end{equation*}
   where $\beta := \lim_{z\to\infty}\E_{z,\sfA}\#\G^{*}_{1}(\sfB)$.
\end{Lemma}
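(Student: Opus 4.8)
The plan is to establish the limit relation by a second-moment / conditional-expectation argument that exploits the BPRE structure of the random $\sfA$-cell line together with the fact that, conditioned on $\Surv(\sfA)$, the parasite numbers in the infected $\sfA$-cells explode. First I would verify that the pointwise limit $\beta=\lim_{z\to\infty}\E_{z,\sfA}\#\G^*_1(\sfB)$ exists and is strictly positive. Since $\E_{z,\sfA}\#\G^*_1(\sfB)$ is the expected number of infected $\sfB$-daughters of a single $\sfA$-cell carrying $z$ parasites, a cell of type $\sfA$ with daughters of types $(\sfA,\sfB)$ (probability $p_{\sfAB}$) or $(\sfB,\sfB)$ — but the latter is impossible for an $\sfA$-mother — so only the $\sfAB$ split contributes a $\sfB$-daughter, which is infected unless all $z$ parasites send $0$ offspring into it; hence $\E_{z,\sfA}\#\G^*_1(\sfB)=p_{\sfAB}\,\Prob(\text{daughter }v1\text{ infected}\mid Z_v=z)=p_{\sfAB}\bigl(1-\Prob(X^{(1)}(\sfA,\sfAB)=0)^z\bigr)$, which increases to $\beta=p_{\sfAB}\bigl(1-\Prob(X^{(1)}(\sfA,\sfAB)=0)^\infty\bigr)$. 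Under \eqref{Eq.Supercritical} one has $p_{\sfAB}>0$ (else $p_{\sfAA}=p_0=p_1<1$ forces... ) and $\Prob(X^{(1)}(\sfA,\sfAB)=0)<1$ by \eqref{SA3}-type nondegeneracy, so $\beta>0$; if $\Prob(X^{(1)}(\sfA,\sfAB)=0)=0$ then $\beta=p_{\sfAB}$ exactly.

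Next I would condition on the $\sigma$-field $\mathcal F_n$ generated by the cell-division process and the parasite numbers up to generation $n$. Given $\mathcal F_n$, the infected $\sfB$-cells of generation $n+1$ with $\sfA$-mothers arise independently across the cells $v\in\G^*_n(\sfA)$, and $\E\bigl(\#\mathbb G^*_n(\sfA,\sfB)\mid\mathcal F_n\bigr)=\sum_{v\in\G^*_n(\sfA)}\E_{Z_v,\sfA}\#\G^*_1(\sfB)$. Writing $a(z):=\E_{z,\sfA}\#\G^*_1(\sfB)$, this equals $\sum_{v\in\G^*_n(\sfA)}a(Z_v)$. The key input is Theorem \ref{Satz.ExplosionInfZellen}\eqref{Item.ExplosionInfZellenB}: conditioned on $\Surv(\sfA)$, $\#\G^*_n(\sfA)\to\infty$; combined with the fact (from the BPRE analysis, Prop.\ \ref{Prop:dist of Zn(A)} and the supercriticality giving $\Prob(Z_n(\sfA)\to\infty)>0$) that the parasite loads $Z_v$ along infected $\sfA$-lines tend to infinity, I would argue that the empirical average $\#\G^*_n(\sfA)^{-1}\sum_{v\in\G^*_n(\sfA)}a(Z_v)\to\beta$ in $\Prob^*$-probability, since $a$ is bounded by $p_{\sfAB}$, monotone, and $a(z)\to\beta$. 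Making this rigorous is where the main work lies: I would show $\Pstlim_n \#\{v\in\G^*_n(\sfA):Z_v\le M\}/\#\G^*_n(\sfA)=0$ for every fixed $M$, using that the expected number of infected $\sfA$-cells with $\le M$ parasites grows like $\nu^n\Prob(0<Z_n(\sfA)\le M)$ whereas $\#\G^*_n(\sfA)\asymp\nu^n L(\sfA)$ on a set of positive probability and the former is of smaller order because a supercritical BPRE conditioned to survive escapes every bounded level.

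Then I would pass from the conditional expectation to $\#\mathbb G^*_n(\sfA,\sfB)$ itself via a conditional second-moment bound: given $\mathcal F_n$, $\#\mathbb G^*_n(\sfA,\sfB)$ is a sum of independent indicator(-type) contributions, one per $v\in\G^*_n(\sfA)$, each bounded by $1$, so its conditional variance is at most $\#\G^*_n(\sfA)$, whence $\mathrm{Var}\bigl(\#\mathbb G^*_n(\sfA,\sfB)\mid\mathcal F_n\bigr)/\#\G^*_n(\sfA)^2\le 1/\#\G^*_n(\sfA)\to 0$ on $\Surv(\sfA)$. A conditional Chebyshev inequality then gives $\#\mathbb G^*_n(\sfA,\sfB)/\#\G^*_n(\sfA)-\#\G^*_n(\sfA)^{-1}\sum_{v\in\G^*_n(\sfA)}a(Z_v)\to 0$ in $\Prob^*$-probability, and combining with the previous step yields the claimed convergence to $\beta$.

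The main obstacle I anticipate is the second step — proving that the parasite loads of \emph{infected} $\sfA$-cells sampled by counting (not by the size-biased random-line measure) escape to infinity in the appropriate proportional sense under $\Prob^*$. The random $\sfA$-cell line gives control of a \emph{typical} infected $\sfA$-cell, but here I need a law-of-large-numbers-type statement over \emph{all} infected $\sfA$-cells simultaneously. I would handle this by an expectation bound: $\E\bigl(\#\{v\in\G^*_n(\sfA):Z_v\le M\}\bigr)=\nu^n\Prob(0<Z_n(\sfA)\le M)$ by Prop.\ \ref{Prop:dist of Zn(A)}, and for a supercritical BPRE $\Prob(0<Z_n(\sfA)\le M)=o(\Prob(Z_n(\sfA)>0))$ and in fact decays geometrically faster than $\nu^{-n}$ relative to $\#\G^*_n(\sfA)$, which on $\Surv(\sfA)$ is comparable to $\nu^n L(\sfA)$ with $L(\sfA)>0$ a.s.\ on $\Surv(\sfA)$ (Theorem \ref{Satz.ErholungB}, using $\nu>1$ under \eqref{Eq.Supercritical}); a Markov-inequality argument along a subsequence together with monotonicity in $M$ then delivers the required $\Prob^*$-convergence.
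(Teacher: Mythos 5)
Your treatment of $\beta$ rests on a misreading of the model and is the one genuine gap. A type-$\sfA$ mother \emph{can} split into two $\sfB$-daughters: the transition kernel allows $(T_{v0},T_{v1})=(\sfB,\sfB)$ with probability $p_{\sfBB}=1-p_{\sfAA}-p_{\sfAB}$. Hence your formula $\E_{z,\sfA}\#\G^*_1(\sfB)=p_{\sfAB}\bigl(1-\Prob(X^{(1)}(\sfA,\sfAB)=0)^z\bigr)$ omits the $\sfBB$-contribution, and (harmlessly) the per-cell contributions in your Chebyshev step are bounded by $2$, not $1$. More seriously, your positivity argument fails: \eqref{Eq.Supercritical} does \emph{not} imply $p_{\sfAB}>0$ (take $p_{\sfAB}=0$, $0<p_{\sfAA}<1$ and $\mu_{0,\sfA}(\sfAA)\mu_{1,\sfA}(\sfAA)>1$), and \eqref{SA3} says nothing about $\Prob(X^{(1)}(\sfA,\sfAB)=0)$; in the case $p_{\sfAB}=0<p_{\sfBB}$ your formula would even give $\beta=0$, contradicting the lemma. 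The correct and immediate argument — the one the paper uses — is that $z\mapsto\E_{z,\sfA}\#\G^*_1(\sfB)$ is nondecreasing (by coupling, since parasites reproduce independently) and $\E_{1,\sfA}\#\G^*_1(\sfB)=\E\#\G^*_1(\sfB)>0$ is part of the standing assumption \eqref{SA5}, whence $\beta\ge\E_{1,\sfA}\#\G^*_1(\sfB)>0$.

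Apart from this, your argument is essentially the paper's. The paper also writes $\#\G^*_n(\sfA,\sfB)$ as a sum over $v\in\G^*_n(\sfA)$ of conditionally independent contributions $\#\G^*_1(\sfB,v)$, replaces the sum by $\sum_{v}\E_{Z_v,\sfA}\#\G^*_1(\sfB)$ via a law of large numbers (your conditional Chebyshev bound, using $\#\G^*_n(\sfA)\to\infty$ $\Prob^*$-a.s.\ from Theorem \ref{Satz.ExplosionInfZellen}, is a clean way to make that step precise), and then bounds the empirical average below by $\beta(1-\eps)$ because the proportion of infected $\sfA$-cells carrying fewer than $z_0$ parasites vanishes. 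For that last point the paper simply invokes Theorem \ref{Satz.Proportion.B1}(a), which is already available when the lemma is proved; you instead prove it from scratch via the first-moment identity of Proposition \ref{Prop:dist of Zn(A)}, the fact that $\Prob(0<Z_n(\sfA)\le M)\to 0$ for the supercritical BPRE, and $L(\sfA)>0$ $\Prob^*$-a.s.\ from Theorem \ref{Satz.ErholungB}. That is a legitimate, more self-contained alternative (it in effect reproves part (a) for bounded $k$), at the price of needing $\nu>1$ and the extinction--explosion dichotomy for the BPRE, both of which are available under the standing assumption $\Prob(\Surv(\sfA))>0$. So: fix the $\beta$-positivity step via \eqref{SA5} and drop the erroneous explicit formula, and the rest of your plan goes through.
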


\begin{proof}
Since $z\mapsto\E_{z,\sfA}\#\G^{*}_{1}(\sfB)$ is increasing and $\E_{1,\sfA}\#\G^{*}_{1}(\sfB)>0$ by our standing assumption \eqref{SA5}, we see that $\beta$ must be positive. Next observe that, for each $n\in\N$,
  \begin{equation*}
    \#\G^*_n(\sfA,\sfB) = \sum_{v\in\G^*_{n-1}(\sfA)}\#\G^*_1(\sfB,v),
  \end{equation*}
where the $\#\G^*_1(\sfB,v)$ are conditionally independent given $\calZ_n(\sfA)>0$. Since $\#\G^*_n(\sfA)\to\infty$ $\Prob^*$-a.s. (Theorem \ref{Satz.ExplosionInfZellen}) and $\Prob^n=\Prob(\cdot|\calZ_n(\sfA)>0)\xrightarrow{w} \Prob^*$, it is not difficult to infer with the help of the SLLN that
  \begin{equation*}
    \frac{\#\mathbb G^*_n(\sfA,\sfB)}{\#\mathbb G^*_n(\sfA)} ~\overset{\Prob^*}\simeq~         
    \frac{1}{\#\G^*_n(\sfA)}\sum_{v\in\G^*_n(\sfA)}\E_{Z_v,\sfA}\#\G^*_1(\sfB),\quad n
    \to\infty.
  \end{equation*}
where $a_{n}\stackrel{\Prob^{*}}{\simeq}b_{n}$ means that $\Prob^{*}(a_{n}/b_{n}\to 1)=1$. Now use $\E_{z,\sfA}\#\G^*_1(\sfB)\uparrow\beta$ to infer the existence of a $z_0\in\N$ such that for all $z\geq z_0$
  \begin{equation*}
    \E_{z,\sfA}\#\G^*_1(\sfB) \geq \beta(1-\eps).
  \end{equation*}
After these observations we finally obtain by an appeal to Theorem \ref{Satz.Proportion.B1}(a) that
  \begin{align*}
   \beta~&\geq~\frac{1}{\#\G^*_n(\sfA)}\sum_{v\in\G^*_n(\sfA)}\E_{Z_v,\sfA}\#\G^*_1(\sfB)\\ 
  &\geq~\sum_{z\geq z_0}\frac{F_z(n,\sfA)}{\#\{v\in\G^*_n(\sfA)|Z_v\geq z_0\}}\sum_{v\in\{u\in\G^*_n(\sfA)|Z_u\geq z_0\}}\E_{Z_v,\sfA}\#\G^*_1(\sfB)\\
  &\geq~\beta(1-\eps)\sum_{z\geq z_0}F_z(n,\sfA)\\
  &\to~ \beta(1-\eps),\quad n\to\infty.
  \end{align*}
This completes the proof of the lemma.
\end{proof}

\begin{Beweis}[Theorem \ref{Satz.Proportion.B1}(b) and (c)]
Let $\eps>0$ and $N\in\N$. Then
	\begin{align*}
	  \#\G^*_n(\sfB) ~&=~ \sum_{k=0}^{n-1}\sum_{v\in\G^*_k(\sfA,\sfB)}\#\mathbb 
	  G^*_{n-k-1}(\sfB,v)\\
	  &\geq~\sum_{k=0}^{n-1}\sum_{v\in\{u\in\mathbb G^*_k(\sfA,\sfB) | Z_u\geq z
	  \}}\#\mathbb G^*_{n-k-1}(\sfB,v)\\
	  &\geq~\sum_{v\in\{u\in\mathbb G^*_{n-1-m}(\sfA,\sfB) | Z_u\geq z\}}\#\mathbb 
	  G^*_{m}(\sfB,v)
	\end{align*}
a.s. for all $n>m\ge 1$ and $z\in\N$, and thus
	\begin{equation}\label{Eq.GA.nur ein m}
	  \begin{split}
	    \Prob^*&\left(\frac{\#\mathbb G^*_n(\sfA)}{\#\mathbb G^*_n}>\frac{1}{N
	    +1}\right)~=~\Prob^*\left(N\,\#\mathbb G^*_n(\sfA) > 
	    \#\mathbb G^*_n(\sfB)\right)\\
	    &\leq~\Prob^*\left(N\,\#\mathbb G^*_n(\sfA)>\sum_{v\in\{u\in\mathbb G^*
	    _{n-1-m}(\sfA,\sfB) | Z_u\geq z\}}\#\mathbb G^*_{m}(\sfB,v)\right).
	  \end{split}
	\end{equation}
Fix $m$ so large that
	\begin{equation*}
	  2^{m}(1-\eps)>\frac{4N}{\beta}.
	\end{equation*}
Then, since 
	\begin{equation*}
	 	\lim_{z\rightarrow\infty}\Prob_{z,\sfB}(\#\mathbb G^*_{m}=2^{m})=1,
	\end{equation*}
there exists $z_0\in\N$ such that
	\begin{equation*}
	 	\Prob_{z,\sfB}(\#\mathbb G^*_{m}=2^{m})~\geq~1-\eps
	\end{equation*}
and therefore
	\begin{equation}\label{Eq.Th.B1.1}
	  \E_{z,\sfB}\#\mathbb G^*_{m}~\geq~(1-\eps)2^{m} ~>~\frac{4N}{\beta}
	\end{equation}
for all $z\geq z_0$. Moreover, $\sum_{k\geq z_0}F_k(n,\sfA)\xrightarrow{\Prob^*} 1$
by part (a), whence
	\begin{equation*}
	  \frac{\#\{v\in\G^*_n(\sfA,\sfB):Z_v\geq z_0\}}{\#\G^*_n(\sfA,\sfB)}\xrightarrow
	  {\Prob^*}1.
	\end{equation*}
This together with Lemma \ref{Lemma.Proportion.B1} yields
	\begin{equation*}
	  \frac{\#\{v\in\G^*_n(\sfA,\sfB):Z_v\geq z_0\}}{\#\G^*_n(\sfA)}\xrightarrow{\Prob^*}\beta
	\end{equation*}
and thereupon
	\begin{equation}\label{Eq.Th.B1.2}
	  \Prob^*\left(\frac{\#\{v\in\G^*_n(\sfA,\sfB):Z_v\geq z_0\}}{\#\G^*_n(\sfA)}\geq 
	  \frac{\beta}{2}\right)\geq1-\eps
	\end{equation}
for all $n\geq n_0$ and some $n_{0}\in\N$. By combining \eqref{Eq.GA.nur ein m} and \eqref{Eq.Th.B1.2}, we now infer for all $n\geq n_0+m$
	\begin{align*}
	  &\Prob^*\left(\frac{\#\mathbb G^*_n(\sfA)}{\#\mathbb G^*_n}>\frac{1}{N
	  +1}\right)\\
	  &\leq~\Prob^*\left(N\,\#\mathbb G^*_n(\sfA) > \sum_{v\in\{u\in\mathbb G^*_{n-1-
	  m}(\sfA,\sfB) : Z_u\geq z\}}\#\mathbb G^*_{m}(\sfB,v)\right)\\
	  &\leq~\Prob^*\left(\frac{2N}{\beta} > \frac{\sum_{v\in\{u\in\mathbb 
	  G^*_{n-1-m}(\sfA,\sfB) : Z_u\geq z\}}\#\mathbb G^*_{m}(\sfB,v)}
	  {\displaystyle\#\{u\in\mathbb G^*_{n-1-m}(\sfA,\sfB):Z_u\geq z\}}\right)+\eps\\
	  &\leq~\Prob^{\,n-m}\left(\frac{2N}{\beta} > \frac{\sum_{v\in\{u\in
	  \mathbb G^*_{n-1-m}(\sfA,\sfB):Z_u\geq z\}}\#\mathbb G^*_{m}(\sfB,v)}
	  {\displaystyle\#\{u\in\mathbb G^*_{n-1-m}(\sfA,\sfB):Z_u\geq z\}}\right)\frac
	  {\Prob(\calZ_{n-m}(\sfA)>0)}{\Prob(\Surv(\sfA))}+\eps\\
	  &\leq~\Prob^{\,n-m}\left(\frac{2N}{\beta} > \frac{\sum_{i=1}^{\#\{u
	  \in\mathbb G^*_{n-1-m}(\sfA,\sfB):Z_u\geq z\}}\mathcal G_{i,m}(z_0)}
	  {\displaystyle\#\{u\in\mathbb G^*_{n-1-m}(\sfA,\sfB):Z_u\geq z\}}\right)\frac
	  {\Prob(\calZ_{n-m}(\sfA)>0)}{\Prob(\Surv(\sfA))}+\eps\\
	\end{align*}
where the $\mathcal G_{i,m}(z_0)$ are iid with the same law as $\#\{v\in\G^*_{m}(\sfB):Z_{\varnothing}=z_0, T_{\varnothing}=\sfB\}$. The LLN provides us with $n_1\geq n_0+m$ such that for all $n\geq n_1$
	\begin{equation*}
	  \Prob^{n-m}\left(\frac{\sum_{i=1}^{\#\{u\in\mathbb G^*_{n-1-m}(\sfA,\sfB): 
	  Z_u\geq z\}}\mathcal G_{i,m}(z_0)}{\#\{u\in\mathbb G^*_{n-1-m}(\sfA,
	  \sfB) :Z_u\geq z\}}\geq  \E\mathcal G_{i,m}(z_0)/2\right)\geq 1-\eps.
	\end{equation*}
By combining this with \eqref{Eq.Th.B1.1}, we can further estimate in the above inequality
	\begin{align*}
	  \Prob^*&\left(\frac{\#\mathbb G^*_n(\sfA)}{\#\mathbb G^*_n}>\frac{1}{N
	  +1}\right)\\
	  &\leq~\left(\Prob^{n-m}\left(\frac{2N}{\beta} > \E\mathcal G_{i,m}(z_0)/2 > \frac{2N}{\beta}\right) +\eps\right)
	  \frac{\Prob(\calZ_{n-m}(\sfA)>0)}{\Prob(\Surv(\sfA))}+\eps\\
	  &=~\left(\frac{\Prob(\calZ_{n-m}(\sfA)>0)}{\Prob(\Surv(\sfA))}+1\right)\eps~\xrightarrow{n\to\infty}~2\eps.
	\end{align*}
This completes the proof of part (b).

\vspace{.2cm}
As for (c), we will show that all conditions needed by Bansaye \cite{Bansaye:08} to prove his Theorem 5.2 are fulfilled. Our assertions then follow along the same arguments as provided there.

\vspace{.2cm}
\textsc{Step 1:} $(\mu_{\sfB}^{-n}\calZ_n(\sfB))_{n\ge 0}$ is a submartingale and converges a.s.\  to a finite limit $W(\sfB)$. The submartingale property follows from 
\begin{align*}
  &\E(\calZ_{n+1}(\sfB)|\calZ_n(\sfB))
  ~=~\E\left(\sum_{v\in\G^*_n}\big(Z_{v0}\1_{\{T_{v0}=\sfB\}}+Z_{v1}\1_{\{T_{v1}=\sfB\}}\big)\Bigg|\calZ_n(\sfB)\right)\\
  &=~\calZ_n(\sfB)\E\left(X^{(0)}(\sfB)+X^{(1)}(\sfB)\right)+\E\left(\sum_{v\in\G^*_n(\sfA)}\hspace{-.2cm}\big(Z_{v0}\1_{\{T_{v0}=\sfB\}}+Z_{v1}\1_{\{T_{v1}=\sfB\}}\big)\Bigg|\calZ_n(\sfB)\right)\\
  &\geq~\calZ_n(\sfB)\mu_{\sfB}
\end{align*}
for any $n\in\N$, while the a.s.\ convergence is a consequence of
\begin{equation*}
 \sup_{n\in\N}\E\left(\frac{\calZ_{n}(\sfB)}{\mu_{\sfB}^{n}}\right)~<~\infty
\end{equation*}
which, using our assumption $\gamma<\mu_{\sfB}$, follows from
\begin{align*}
  \E\left(\frac{\calZ_{n+1}(\sfB)}{\mu_{\sfB}^{n+1}}\right) ~&=~ \E\left(\frac{\calZ_{n}(\sfB)}{\mu_{\sfB}^{n}}\right)+\E\left(\frac{1}{\mu_{\sfB}^{n+1}}\sum_{v\in\G^*_n(\sfA)}Z_{v0}\1_{\{T_{v0}=\sfB\}}+Z_{v1}\1_{\{T_{v1}=\sfB\}}\right)\\
  &=~\E\left(\frac{\calZ_{n}(\sfB)}{\mu_{\sfB}^{n}}\right)+\frac{1}{\mu_{\sfB}^{n+1}}\E\left(\calZ_n(\sfA)\underbrace{\E(Z_{0}\1_{\{T_{0}=\sfB\}}+Z_{1}\1_{\{T_{1}=\sfB\}})}_{=:\mu_{\sfAB}}\right)\\
  &=~\E\left(\frac{\calZ_{n}(\sfB)}{\mu_{\sfB}^{n}}\right)+\frac{\mu_{\sfAB}}{\mu_{\sfB}}\left(\frac{\gamma}{\mu_{\sfB}}\right)^n\\
   =...&=~\frac{\mu_{\sfAB}}{\mu_{\sfB}}\sum_{k=0}^n\left(\frac{\gamma}{\mu_{\sfB}}\right)^k\
 \ \leq~\frac{\mu_{\sfAB}}{\mu_{\sfB}}\sum_{k=0}^{\infty}\left(\frac{\gamma}{\mu_{\sfB}}\right)^k~<~\infty
\end{align*}
for any $n\in\N$.

\vspace{.2cm}
\textsc{Step 2:} $\{W(\sfB)=0\}=\Ext$ a.s.\\
The inclusion $\supseteq$ being trivial, we must only show that $\Prob(W(\sfB)>0)\geq\Prob(\Surv)$. For $i\ge 1$, let $(\calZ_{i,n}(\sfB))_{n\ge 0}$ be iid copies of $(\calZ_{n}(\sfB))_{n\ge 0}$ under $\Prob_{1,\sfB}$. Each $(\calZ_{i,n}(\sfB))_{n\ge 0}$ forms a Galton-Watson process which dies out iff $\mu_{\sfB}^{-n}\calZ_{i,n}(\sfB)\to 0$ (see \cite{Bansaye:08}). Then for all $m,N\in\N$, we obtain
\begin{align*}
  \Prob(W(\sfB)>0) ~&=~\Prob\left(\lim_{n\to\infty}\frac{\calZ_{m+n}(\sfB)}{\mu_{\sfB}^{m+n}}>0\right)\\
  &\geq~\Prob\left(\lim_{n\to\infty}\frac{1}{\mu_{\sfB}^m}\sum_{i=1}^{\calZ_{m}(\sfB)}\frac{\calZ_{i,n}(\sfB)}{\mu_{\sfB}^{n}}>0\right)\\
  &\geq~\Prob\left(\lim_{n\to\infty}\frac{1}{\mu_{\sfB}^{m}}\sum_{i=1}^{\calZ_{m}(\sfB)}\frac{\calZ_{i,n}(\sfB)}{\mu_{\sfB}^{n}}>0, \calZ_m(\sfB)\geq N\right)\\
  &\geq~\Prob\left(\lim_{n\to\infty}\sum_{i=1}^{N}\frac{\calZ_{i,n}(\sfB)}{\mu_{\sfB}^{n}}>0, \calZ_{m}(\sfB)\geq N\right)\\
  &\geq~\Prob\left(\calZ_m(\sfB)\geq N\right)-\Prob_{1,\sfB}\left(\lim_{n\to\infty}\sum_{i=1}^{N}\frac{\calZ_{i,n}(\sfB)}{\mu_{\sfB}^{n}}=0\right)\\
  &=~\Prob\left(\calZ_{m}(\sfB)\geq N\right)-\Prob_{1,\sfB}\left(\lim_{n\to\infty}\frac{\calZ_{n}(\sfB)}{\mu_{\sfB}^{n}}=0\right)^N\\
  &=~\Prob\left(\calZ_{m}(\sfB)\geq N|\Surv\right)\,\Prob(\Surv)-\Prob_{1,\sfB}\left(\Ext\right)^N.
\end{align*}
and then, upon letting $m$ and $N$ tend to infinity,
\begin{equation*}
 \Prob(W(\sfB)>0)~\geq~\Prob(\Surv)
\end{equation*}
because $\Prob_{1,\sfB}\left(\Ext\right)<1$ and by Theorem \ref{Satz.ExplosionInfZellen}.

\vspace{.2cm}
\textsc{Step 3:} $\sup_{n\ge 0}\E\xi_{n}<\infty$, where $\xi_{n}:=\left(\mu_{\sfB}/2\right)^{-n} Z_{[n]}$.

\vspace{.1cm}\noindent
First, we note that $(Z_{[n]})_{n\ge 0}$, when starting with a $\sfB$-cell hosting one parasite (under $\Prob_{1,\sfB}$), is a BPRE with mean $\mu_{\sfB}/2$ (see \cite{Bansaye:08}). Second, we have
\begin{equation*}
 \E Z_{[n]}\1_{\{T_{[n]}=\sfA\}} ~=~ \Prob(T_{[n]}=\sfA)\,\E Z_{n}(\sfA) ~=~ \left(\frac{\gamma}{2}\right)^n
\end{equation*}
and thus
\begin{align*}
  \E Z_{[n]} ~&=~ \E Z_{[n]}\1_{\{T_{[n]}=\sfA\}}+\sum_{m=0}^{n-1}\E Z_{[n]}\1_{\{T_{[m]}=\sfA,T_{[m+1]}=\sfB\}}\\
  &=~\left(\frac{\gamma}{2}\right)^n+\sum_{m=0}^{n-1}\E Z_{[m]}\1_{\{T_{[m]}=\sfA\}}\,\E_{1,\sfA}Z_{[1]}\1_{\{T_{[1]}=\sfB\}}\,\E_{1,\sfB}Z_{[n-m-1]}\\
  &=~\left(\frac{\gamma}{2}\right)^n+\eta\sum_{m=0}^{n-1}\left(\frac{\gamma}{2}\right)^m\left(\frac{\mu_{\sfB}}{2}\right)^{n-m-1}
\end{align*}
for all $n\in\N$, where $\eta:=\E_{1,\sfA}Z_{[1]}\1_{\{T_{[1]}=\sfB\}}$. This implies
\begin{equation}\label{Eq.BPRE.l1beschr}
  \sup_{n\in\N}\E\xi_n ~=~ \left(\frac{\gamma}{\mu_{\sfB}}\right)^n+\frac{2\eta}{\mu_{\sfB}}\sum_{m=0}^{n-1}\left(\frac{\gamma}{\mu_{\sfB}}\right)^m~\leq~c\sum_{m=0}^{\infty}\left(\frac{\gamma}{\mu_{\sfB}}\right)^m~<\infty
\end{equation}
for some $c<\infty$.

\vspace{.2cm}
\textsc{Step 4:} $\lim_{K\to\infty}\sup_{n\ge 0}\E\xi_n\1_{\{Z_{[n]}\geq K\}}=0$.

\vspace{.1cm}\noindent
By our assumptions, $(Z_{[n]})_{n\ge 0}$, when starting in a $\sfB$-cell with one parasite, is a strongly subcritical BPRE with mean $\mu_{\sfB}/2$ (see \cite{Bansaye:08}). Hence, by \cite[Corollary 2.3]{Afanasyev+etal:05}, 
\begin{equation}\label{Eq.B1c.gi}
  \lim_{K\to\infty}\sup_{n\ge 0}\E_{1,\sfB}\xi_n\1_{\{Z_{[n]}> K\}} ~=~0,
\end{equation}
which together with \eqref{Eq.BPRE.l1beschr} implies for $n,m\in\N$
\begin{align*}
 \lim_{K\to\infty}\sup_{n\ge 0}\E\, &\xi_{n+m}\1_{\{Z_{[n+m]}>K\}}\\
 ~&\leq~ \lim_{K\to\infty}\sup_{n\ge 0}\E\,\xi_{n+m}\1_{\{Z_{[n+m]}>K\}}\1_{\{T_{[m]}=\sfB\}}+\sup_{n\ge 0}\E\,\xi_{n+m}\1_{\{T_{[m]}=\sfA\}}\\
  &\leq~ \lim_{K\to\infty}\sup_{n\ge 0}\E_{1,\sfB}\xi_n\1_{\{Z_{[n]}>K\}}\,\E \xi_m+\E \xi_m\1_{\{T_{[m]}=\sfA\}}\,\sup_{n\in\N}\E\,\xi_n\\
  &\leq~ \left(\frac{\gamma}{\mu_{\sfB}}\right)^m\sup_{n\in\N}\E\,\xi_n
\end{align*}
and the last expression can be made arbitrarily small by choosing $m$ sufficiently large, for $\gamma<\mu_{\sfB}$. This proves Step 4.

\vspace{.2cm}
\textsc{Final step:}
Having verified all conditions needed for the proof of Theorem 5.2 in \cite{Bansaye:08}, one can essentially follow his arguments to prove Theorem \ref{Satz.Proportion.B1}(c). We refrain from supplying all details here and restrict ourselves to an outline of the main ideas. First use
what has been shown as \textsc{Step 1 - 4} to prove an analogue of \cite[Lemma 6.5]{Bansaye:08}, i.e. \textit{(control of filled-in cells)}
\begin{equation}\label{control.filled.in.cells}
 	\lim_{K\to\infty}\sup_{n,q\geq0}\Prob^*\left(\frac{\#\{v\in\G^*_{n+q}(\sfB):Z_{v|n}>K\}}{\#\G^*_{n+q}(\sfB)}\geq\eta\right)=0
\end{equation}
for all $\eta>0$, and of \cite[Prop.\ 6.4]{Bansaye:08}, i.e. \textit{(separation of descendants of parasites)}
\begin{equation}\label{sep.of.descendants}
 \lim_{q\to\infty}\sup_{n\geq0}\Prob^*\left(\frac{\#\{v\in\G^*_{n+q}(\sfB):Z_{v|n}\leq K, N_n(v)\geq2\}}{\#\G^*_{n+q}(\sfB)}\geq\eta\right)=0
\end{equation}
for all $\eta>0, K\in\N$, where $N_n(v)$ denotes the number of parasites in cell $v|n$ with at least one descendant in cell $v$. In particular, \eqref{control.filled.in.cells} (with $q=0$) combined with $\#\G^*_n(\sfB)\to\infty$ $\Prob^*$-a.s. implies the existence of a $K_0\geq0$ such that for all $N\in\N$
\begin{equation}\label{concentration.of.parasites}
 \lim_{n\to\infty}\inf_{K\geq K_0}\Prob^*\left(\sum_{v\in\G^*_n(\sfB)}Z_v\1_{\{Z_v\leq K\}}\geq N\right) = 1.
\end{equation}
Using \eqref{control.filled.in.cells} and \eqref{sep.of.descendants}, we infer that, for all $\eta, \eps>0$, there exist $K_1\geq K_0$ and $q_0\in\N$ such that for all $n\in\N$
\begin{equation*}
 \Prob^*\Bigg(\Bigg|F_k(n+q_0,\sfB)-\underbrace{\frac{\#\{v\in\G^*_{n+q_0}(\sfB) | Z_v=k, Z_{v|n}\leq K_1, N_n(v)=1\}}{\#\{v\in\G^*_{n+q_0}(\sfB) | Z_{v|n}\leq K_1, N_n(v)=1\}}}_{=: J_{n}}\Bigg|\geq\eta\Bigg)~\leq~\eps.
\end{equation*}
Since $\#\G^*_n(\sfA)/\#\G^*_n(\sfB)\xrightarrow{\Prob^*}0$, we further get
\begin{align*}
 J_{n} ~&\overset{\Prob^*}{{\underset{n\to\infty}{\simeq}}}~ \frac{\#\{v\in\G^*_{n+q_0}(\sfB) | Z_v=k, Z_{v|n}\leq K_1, T_{v|n}=\sfB, N_n(v)=1\}}{\#\{v\in\G^*_{n+q_0}(\sfB) | Z_{v|n}\leq K_1, T_{v|n}=\sfB, N_n(v)=1\}}
\end{align*}
as $n\to\infty$, which puts us in the same situation as in the proof of \cite[Thm.\ 5.2]{Bansaye:08}. Now, by using \eqref{concentration.of.parasites} and the LLN, we can identify the limit of $J_n$ which is in fact the same as in Step 1 of the proof of \cite[Thm.\ 5.2]{Bansaye:08}. A reproduction of the subsequent arguments from there finally establishes the result.
\end{Beweis}

\section*{Acknowledgment}

We are indebted to Joachim Kurtz (Institute of Evolutionary Biology, University of M\"unster) 
for sharing with us his biological expertise of host-parasite coevolution and many fruitful discussions that helped us to develop the model studied in this paper.

\section{Glossary}

\begin{tabular}[c]{p{0.25\textwidth} p{0.75\textwidth}}
 	$\T$		& cell tree\\
	$\G_n$		& set of cells in generation $n$\\
	$\G_n(\sft)$	& set of cells of type $\sft$ in gegeration $n$\\
	$\G^*_n$	& set of contaminated cells in generation $n$\\
	$\G^*_n(\sft)$	& set of contaminated cells of type $\sft$ in generation $n$\\
	$T_v$		& type of cell $v$\\
	$p_{\sfs}$		& probability that the daughter cell of an $\sfA$-cell is of type $\sfs$\\
	$p_0$		& probability that the $1^{st}$ daughter cell of an $\sfA$-cell 
				is of type $\sfA$\\
	$p_1$		& probability that the $2^{nd}$ daughter cell of an $\sfA$-cell 
				is of type $\sfA$\\
	$\nu$		& mean number of type-$\sfA$ daughter cells of an $\sfA$-cell\\
	$(X^{(0)}(\sfA,\sfs),X^{(1)}(\sfA,\sfs))$ & offspring numbers of an $\sfA$-parasite with 
				daughter cells of type $\sfs\in\{\sfAA,\sfAB,\sfBB\}$\\
	$(X^{(0)}(\sfB),X^{(1)}(\sfB))$	& offspring numbers of a $\sfB$-parasite\\
	$Z_v$		& number of parasites in cell $v$\\
	$\mu_{i,\sft}(\sfs)$	& mean number of offspring of a $\sft$-parasite which goes in 
				daughter cell $i\in\{0,1\}$ if daughter cells are of type 
				$\sfs\in\{\sfAA,\sfAB,\sfBB\}$\\
	$\mu_{i,\sfB}$	& mean offspring number of $\sfB$-parasites which go in 
				daughter cell $i\in\{0,1\}$\\
	$\mu_{\sfB}$	& reproduction mean of a parasite in a $\sfB$-cell\\
	$\calZ_n$		& number of parasites in generation $n$\\
	$\Z_n(\sft)$ 	& number of parasites in $\sft$-cells in generation $n$\\
	$\Ext/\Surv$		& event of extinction/survival of parasites\\
	$\Ext(\sft)/\Surv(\sft)$	& event of extinction/survival of $\sft$-parasites\\
	$Z_{[n]}$	& number of parasites in a random cell in generation $n$\\
	$Z_n(\sfA)$	& number of parasites of a random $\sfA$-cell in generation $n$\\
	$f_{n}(s|\Lambda),\ f_{n}(s)$ &quenched and annealed generating function of $Z_{n}(\sfA)$, respectively\\
	$g_{\Lambda_n}(s)$	& generating function giving the $n$-th reproduction law of the 
				process of a random $\sfA$ cell line\\
	$\gamma$	& mean number of offspring of an $\sfA$-parasite which go in an $\sfA$-cell\\
	$\hat\gamma$	& $:=\nu\,\E g^{\prime}_{\Lambda_1}(1)^2
		=p_{\sfAA}\left(\mu^2_{0,\sfA}(\sfAA)+\mu^2_{1,\sfA}(\sfAA)\right)
		+p_{\sfAB}\,\mu^2_{0,\sfA}(\sfAB)$\\
	$\Prob_{z,\sft}$	& probability measure under which the process starts with one $\sft$-cell 
	containing $z$ parasites\\
	$\Prob^*_{z,\sft}$	& the same as before but conditioned upon $\Surv(\sfA)$\\
	$\Prob^n_{z,\sft}$	& the same as before but conditioned upon survival of 
	$\sfA$-parasites in generation $n$
\end{tabular}


\end{document}